\newtheorem{theorem}{Theorem}[section] %
\newtheorem{corollary}[theorem]{Corollary} %
\newtheorem{lemma}[theorem]{Lemma} %
\newtheorem*{mainthm}{Main Theorem} %
\newtheorem{proposition}[theorem]{Proposition} %
{\theoremstyle{remark} %
  \newtheorem{remark}[theorem]{Remark}} %
{\theoremstyle{definition} %
  \newtheorem{definition}[theorem]{Definition} %
  \newtheorem{example}[theorem]{Example} %
  \newtheorem{question}[theorem]{Question} %
}
\newcommand{\PP}[0]{\ensuremath{\mathbb{P}}}
\newcommand{\ZZ}[0]{\ensuremath{\mathbb{Z}}}
\newcommand{\OO}[0]{\ensuremath{\mathcal{O}}}
\def\fg{{\mathfrak g}}
\def\fg{{\mathfrak g}}\def\fh{{\mathfrak h}}
\def\fp{{\mathfrak p}}
\DeclareMathOperator{\Fl}{Fl}
\let\@wraptoccontribs\wraptoccontribs
\begin{document}

\title[]{Projective manifolds whose tangent bundle is Ulrich}

\author{Pedro Montero}
\address{Departamento de Matem\'aticas, Universidad T\'ecnica
  Fe\-de\-ri\-co San\-ta Ma\-r\'\i a, Valpara\'\i
  so, Chile}  \email{pedro.montero@usm.cl}

\author{Yulieth Prieto--Monta\~{n}ez} %
\address{Dipartimento di Matematica, Universit\`a di Bologna, Piazza di Porta S. Donato, 5, I-40126 Bologna, Italy} %
\email{yulieth.prieto2@unibo.it}

\author{Sergio Troncoso}
\address{Departamento de Matem\'aticas, Universidad T\'ecnica
  Fe\-de\-ri\-co San\-ta Ma\-r\'\i a, Valpara\'\i
  so, Chile}  
\email{sergio.troncosoi@usm.cl}
  
\contrib[with an appendix by]{Vladimiro Benedetti}
\address{Institut de Mathématiques de Bourgogne, UMR CNRS 5584, Universit\'e de Bourgogne et Franche-Comt\'e, 9 Avenue Alain Savary, BP 47870, 21078 Dijon Cedex, France}  
\email{vladimiro.benedetti@u-bourgogne.fr}

\date{\today}

\thanks{{\it 2010 Mathematics Subject
    Classification}: 14J20, 14J60, 14M05, 14M12.\\
  \mbox{\hspace{11pt}}{\it Key words}: aCM vector bundles, Homogeneous spaces, Rationally connected varieties, Ulrich vector bundles.}

\begin{abstract}
In this article, we give numerical restrictions on the Chern classes of Ulrich bundles on higher-dimensional manifolds, which are inspired by the results of Casnati in the case of surfaces. As a by-product, we prove that the only projective manifolds whose tangent bundle is Ulrich are the twisted cubic and the Veronese surface. Moreover, we prove that the cotangent bundle is never Ulrich.
\end{abstract}

\maketitle


\setcounter{tocdepth}{1}
\tableofcontents

\section{Introduction}

Ulrich vector bundles were introduced in \cite{Ulr84} in terms of commutative algebra. Their first geometric manifestations go back to seminal works such as \cite{Bea00,ES03}, where the authors relate in a very precise way the existence of such bundles on a smooth projective complex variety $X\subseteq \mathbb{P}^N$ to some measurement of the complexity of the underlying polarized variety $(X,\mathcal{O}_X(1))$. We refer the reader to \cite{Cos17,Bea18} for comprehensive introductions to Ulrich bundles.

One of the major questions of the subject is whether or not every smooth projective variety admits an Ulrich bundle with respect to some polarization. This question has been answered positively in many interesting situations. For instance, we have the following (non-exhaustive) list of smooth projective complex varieties that are known to support Ulrich bundles:

\begin{itemize}
    \item Algebraic curves. See \cite[\S 4]{ES03}.
    \item Complete intersections in the projective space. See \cite{BHU87,HUB91}.
    \item Veronese varieties. See \cite[\S 5]{ES03}.
    \item Grassmannians and, more generally, many rational homogeneous spaces of Picard number one carry an equivariant Ulrich bundle. See \cite{CMR15,Fon16,LP21}.
    \item Some ruled and del Pezzo surfaces. See \cite{ES03,Cas17,ACMR18,Cas19,ACCMRT20}.
    \item Abelian surfaces, bielliptic surfaces, K3 surfaces and Enriques surfaces. See \cite{Bea16,AFO17,Cas17,Bea18,BN18,Fae19}.
    \item Elliptic and some quasi-elliptic surfaces. See \cite{MRPL19,Lop21}.
    \item Some surfaces of general type. See \cite{Cas17,Bea18,Cas18,Cas19,Lop19,Lop21}.
    \item Some Fano threefolds of Picard number one. See \cite{Bea18}.
\end{itemize}

There are several techniques that have been developed in order to produce Ulrich bundles. Among them, we can mention the use of tools from commutative algebra (cf. \cite{HUB91}), representation theory and Borel--Bott--Weil theorem (cf. \cite{ES03}), the study of some Noether--Lefschetz loci (cf. \cite{AFO17}), the Hartshorne--Serre construction (cf. \cite[\S 6]{Bea18}), and the use of the Cayley--Bacharach property for suitable zero-dimensional sub-schemes on surfaces (cf. \cite[\S 5]{Bea18}). Additionally, these techniques have been modified in order to produce certain torsion-free sheaves which, in some cases, can be deformed inside their corresponding moduli space of semi-stable sheaves to obtain the desired Ulrich vector bundles (cf. \cite{Fae19}). 

In practice, many of these constructions are not explicit, and they depend on some choices (e.g., suitable codimension two subschemes to be used in the Cayley--Bacharach or Hartshorne--Serre construction). Because of this, even for varieties where the existence of Ulrich bundles is known to be true, it is a natural and challenging problem to classify Ulrich bundles with fixed numerical invariants and to determine the {\em Ulrich complexity} of a given smooth projective variety $X$ (i.e., the minimum integer $r\in \mathbb{N}_{\geq 1}$ such that there exists a rank $r$ Ulrich bundle on $X$). We refer the interested reader to \cite{BES17, FK20} (and the references therein) for some results towards the Ulrich complexity of smooth hypersurfaces in the projective space.

In this article, we adopt the new approach to the study of Ulrich bundles that was recently initiated by Lopez and his collaborators in \cite{Lop20,LM21,LS21}. More precisely, they study the positivity properties of Ulrich vector bundles and give classification results for projective varieties carrying Ulrich bundles for which these positivity conditions fail. Along the same lines, it is natural to try to understand (by means of positivity techniques) smooth projective manifolds that enjoy the property of having Ulrich vector bundles that are canonically attached to them.

The main results in this article state that if $X\subseteq \mathbb{P}^N$ is a smooth projective variety of dimension $n\geq 1$, then the cotangent bundle $\Omega_X^1$ is never an Ulrich bundle (see Theorem \ref{thm:theefolds}) and for the tangent bundle we have the following:

\begin{mainthm}\label{mainthm}\hypertarget{maintheorem}{}
Let $(X,\mathcal{O}_X(H))$ be a smooth projective polarized variety of dimension $n\geq 1$. If $T_X$ is an Ulrich vector bundle with respect to $H$, then $(X,\mathcal{O}_X(H))$ is isomorphic to the twisted cubic in $\mathbb{P}^3$ or to the Veronese surface in $\mathbb{P}^5$. 
\end{mainthm}

Our main inspiration comes from the systematic study of the positivity of the tangent bundle, initiated by the solutions of Mori and Siu--Yau to the Hartshorne and Frankel conjectures and pushed further by many authors in order to give structure results for manifolds whose tangent bundle satisfies weaker positivity assumptions. It turns out that smooth projective varieties with Ulrich tangent bundle fit very well into this picture, and we show that they are rational homogeneous spaces with rather a large automorphism group. We refer the reader to \cite{MOSCWW} for a survey on the Campana--Peternell conjecture about projective manifolds with nef tangent bundle and the relationship with rational homogeneous spaces and, more generally, to the recent survey \cite{Mat21} for an account on the positivity of the tangent bundle from an analytic and algebraic point of view.

A first ingredient for our analysis, is that there are many numerical restrictions on the Chern classes of Ulrich vector bundles. This has already been observed in the case of surfaces (cf. \cite[\S 6]{ES03}), and used notably by Casnati (cf. \cite{Cas17,Cas19}) in order to give a numerical characterization of Ulrich bundles on surfaces. We extend this characterization to the case of threefolds in Proposition \ref{prop:threefolds}, and we observe in Lemma \ref{lemma:c1} a useful restriction concerning the first Chern class of Ulrich bundles in any dimension. 

\subsection*{Outline of the article}  In \S \ref{subsection:ulrich} and \S \ref{subsection:RC} we recall the main properties of Ulrich bundles on smooth projective varieties and the numerical characterization of rationally connected varieties by means of movable classes. In \S \ref{subsection:G/P} we summarize some results about the automorphism groups of rational homogeneous spaces of Picard number one, and we prove Lemma \ref{lemma:Aut(G/P)} concerning large automorphism groups. In \S \ref{section:chern} we revisit some known results concerning the Chern classes of Ulrich vector bundles, and we prove Proposition \ref{prop:threefolds} and Lemma \ref{lemma:c1} reported above. Finally, in \S \ref{section:TX Ulrich} we prove our \hyperlink{maintheorem}{Main Theorem} and Theorem \ref{thm:theefolds}. To do so, we carry out an analysis depending on the dimension in \S \ref{subsection:curves}, \S \ref{subsection:surfaces} and \S \ref{subsection:threefolds} to reduce the problem to analyse higher dimensional varieties of Picard rank at least two. As was suggested to us and proved in Appendix \ref{appendix} by Vladimiro Benedetti, this last case can be settled by means of Lie algebra computations.

\subsection*{Acknowledgments} We would like to sincerely thank Baohua \textsc{Fu} for answering our questions concerning rational homogeneous spaces, Shin-Ichi \textsc{Matsumura} for discussions around the positivity of the tangent bundle, and to Gianfranco \textsc{Casnati} for his insightful comments on the first version of this article. We are particularly grateful to Vladimiro \textsc{Benedetti} for providing us with very useful references concerning simple rational homogeneous spaces and for writing Appendix \ref{appendix}, which provides a crucial step to the present article. 

The first author was partially supported by Fondecyt ANID projects 11190323 and 1200502. The second author is supported by the fellowship INDAM-DP-COFUND-2015 ``INDAM Doctoral Programme in Mathematics and/or Applications Cofunded by Marie Sklodowska-Curie Actions", Grant Number 713485. The third author was partially supported by Fondecyt ANID project 3210518.

\section{Background and Preliminaries}\label{section:preliminaries}

Let $X\subseteq \PP^N$ be a smooth projective variety over $\mathbb{C}$ of $\dim(X)=n$, and let $H$ be a very ample divisor on $X$ such that $\OO_X(H)\cong \OO_X(1)$. We denote by $d:=\deg(X)=H^n\geq 1$ the degree of $X$.

\subsection{Ulrich bundles}\label{subsection:ulrich}

Given $m\in \mathbb{Z}$ and a coherent sheaf $E$ on $X$, we write $E(mH):=E\otimes \OO_X(mH)$. With this notation, we say that $E$ is {\bf initialized} if $\operatorname{H}^0(X,E)\neq 0$ and $\operatorname{H}^0(X,E(-H))=0$.

Let us recall the following result from \cite[\S 2]{ES03} (see also \cite[Theorem 2.3]{Bea18}). 

\begin{theorem}[Eisenbud--Schreyer--Weyman]\label{thm:eisenbud}
Let $E$ be a rank $r\geq 1$ vector bundle on $(X,\mathcal{O}_X(H))$. The following are equivalent conditions:
\begin{enumerate}
\item $E$ admits a linear resolution of the form
$$0 \rightarrow \OO_{\PP^N}(-N+n)^{\oplus a_{N-n}} \rightarrow \cdots \rightarrow \OO_{\PP^N}(-1)^{\oplus a_1} \rightarrow \OO_{\PP^N}^{\oplus a_0} \rightarrow E \rightarrow 0. $$
In particular, $a_0=r\deg(X)$ and $a_i=\binom{N-n}{i}a_0$   for all $i$.
\item $\operatorname{H}^i(X,E(-jH))=0$ for all $i\geq 0$ and all $j\in \{1,\ldots,n\}$.
\item $\operatorname{H}^i(X,E(-iH))=\operatorname{H}^j(X,E(-(j+1)H))=0$ for every $i>0$ and $j<n$.
\item For all finite linear projections $\pi:X\to \PP^n$, the sheaf $\pi_*E$ is the trivial sheaf $\OO_{\PP^n}^{\oplus t}$ for some $t$.
\end{enumerate}
The vector bundle $E$ is called an {\bf Ulrich bundle} with respect to $H$ if it satisfies any of these conditions. 
\end{theorem}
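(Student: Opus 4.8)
The plan is to funnel all four conditions through a generic finite linear projection $\pi\colon X\to\PP^n$ and reduce them to a single splitting statement on $\PP^n$. Projecting from a general linear center $L\cong\PP^{N-n-1}$ disjoint from $X$ yields a finite morphism with $\pi^*\OO_{\PP^n}(1)\cong\OO_X(H)$ and $\deg(\pi)=\deg(X)=d$; since $X$ is smooth (hence Cohen--Macaulay) and $\PP^n$ is regular of the same dimension, miracle flatness shows $\pi$ is flat, so $F:=\pi_*E$ is a vector bundle on $\PP^n$ of rank $rd$. Finiteness kills the higher direct images and the projection formula gives $\pi_*(E(mH))\cong F(m)$, whence $\HH^i(X,E(mH))\cong\HH^i(\PP^n,F(m))$ for all $i,m$. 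Under this dictionary, condition (2) reads $\HH^i(\PP^n,F(-j))=0$ for all $i\ge0$ and $1\le j\le n$; condition (3) reads $\HH^i(F(-i))=0$ $(i>0)$ together with $\HH^j(F(-j-1))=0$ $(0\le j<n)$; and condition (4) says that $F\cong\OO_{\PP^n}^{\oplus t}$ for the chosen $\pi$. Because the groups $\HH^i(X,E(mH))$ are intrinsic to $E$, triviality of $\pi_*E$ for one projection will be equivalent to triviality for all of them, which is exactly what (4) asserts.

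The cohomological heart is the following splitting criterion, which I would prove by induction on $n$: a vector bundle $F$ on $\PP^n$ is trivial if and only if $\HH^i(F(-i))=0$ for $i>0$ and $\HH^j(F(-j-1))=0$ for $0\le j<n$. Restricting along a hyperplane $\PP^{n-1}\subset\PP^n$ and using $0\to F(-i-1)\to F(-i)\to F|_{\PP^{n-1}}(-i)\to0$ shows that both diagonal vanishings descend to $F|_{\PP^{n-1}}$, the index shifts landing exactly back on the two diagonals; the base case $\PP^1$ is Grothendieck's splitting theorem, where $\HH^0(F(-1))=\HH^1(F(-1))=0$ forces $F\cong\OO^{\oplus t}$. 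To pass from the restriction to $F$ itself, I note that $\HH^i(F(-i))=0$ for $i>0$ is precisely $0$-regularity, so $F$ is globally generated, while $0\to F(-1)\to F\to F|_{\PP^{n-1}}\to0$ together with $\HH^0(F(-1))=\HH^1(F(-1))=0$ gives $\HH^0(F)\cong\HH^0(F|_{\PP^{n-1}})\cong\CC^t$; hence the evaluation map $\OO^{\oplus t}\to F$ is a surjection of bundles of equal rank $t$, and therefore an isomorphism. This single lemma yields $(2)\Leftrightarrow(3)\Leftrightarrow(4)$ at once, all three being equivalent to $F\cong\OO^{\oplus t}$, and along the way identifies $t=\HH^0(X,E)=rd=a_0$.

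It remains to weave in the resolution (1). The implication $(1)\Rightarrow(2)$ is a direct chase: viewing $E$ as a sheaf on $\PP^N$ and twisting its linear resolution by $\OO_{\PP^N}(-j)$ with $1\le j\le n$ produces only summands $\OO_{\PP^N}(-k-j)$ with $1\le k+j\le N$, all acyclic on $\PP^N$, so breaking the resolution into short exact sequences forces $\HH^i(X,E(-jH))=0$. The converse $(2)\Rightarrow(1)$ is the step I expect to be the main obstacle, since it must upgrade vanishing of cohomology into an honest linear minimal free resolution: I would pass to the graded $S$-module $M=\bigoplus_m\HH^0(\PP^N,E(m))$ and show that (2) forces $\operatorname{Tor}^S_i(M,\CC)$ to be concentrated in degree $i$, i.e.\ that the resolution is linear, translating cohomological vanishing into the vanishing of off-diagonal graded Betti numbers via local duality and Castelnuovo--Mumford regularity. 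The precise values $a_i=\binom{N-n}{i}a_0$ should then fall out of the Koszul resolution of $\OO_{\PP^n}$ inside $\OO_{\PP^N}$, matching the picture of $\pi_*E\cong\OO^{\oplus t}$ tensored with the Koszul complex on the $N-n$ linear forms cutting out $\PP^n$. The delicate point throughout is converting purely numerical cohomological data into the structural conclusions---splitting and linearity---rather than the bookkeeping of indices.
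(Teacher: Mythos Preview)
The paper does not give its own proof of this theorem: it is recalled from the literature with the line ``Let us recall the following result from \cite[\S 2]{ES03} (see also \cite[Theorem 2.3]{Bea18})'' and then used as a black box. So there is no proof in the paper to compare against.

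That said, your sketch is essentially the standard argument one finds in those references. Funnelling everything through a finite linear projection $\pi\colon X\to\PP^n$, using flatness to make $F=\pi_*E$ locally free, and then proving the diagonal splitting criterion on $\PP^n$ by induction via restriction to a hyperplane is exactly the expected route, and your index bookkeeping for the two diagonals is correct. The passage $(4)\Rightarrow(1)$ is also most cleanly done the way you hint at the end: once $\pi_*E\cong\OO_{\PP^n}^{\oplus rd}$, the section module $\bigoplus_m\HH^0(X,E(m))$ is free over $\mathbb{C}[x_0,\dots,x_n]$, and its $S=\mathbb{C}[x_0,\dots,x_N]$--resolution is simply $rd$ copies of the Koszul complex on the $N-n$ linear forms cutting out the projection center, which immediately yields the linear resolution with $a_i=\binom{N-n}{i}rd$. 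Your alternative suggestion via local duality and graded Betti numbers also works but is heavier than necessary; the Koszul route is the one to keep.
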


As a consequence of the previous result, it can be shown that Ulrich bundles enjoy several good properties (see e.g. \cite[\S 3]{Bea18}). Let us recall the most important for our purposes. 

\begin{itemize}
\item Let $E$ be a rank $r$ Ulrich vector bundle on $(X,\mathcal{O}_X(H))$. Then $E$ is {\bf aCM} with respect to $H$, i.e., $\operatorname{H}^i(X,E(jH))=0$ for all $j\in \mathbb{Z}$ and $1\leq i \leq n-1$. Moreover, $h^0(X,E)=rd$.
\item An Ulrich bundle is $0$--regular in the sense of Castelnuovo--Mumford, and hence it is globally generated (see e.g. \cite[\S 1.8.A]{LazI}). In particular, an Ulrich bundle is nef.
\item Let $E$ be an Ulrich vector bundle on $(X,\mathcal{O}_X(H))$ and $Y\in |H|$ be a smooth hyperplane section. Then $E|_Y$ is an Ulrich bundle on $Y$ with respect to $\mathcal{O}_X(H)|_Y$.
\item Let $E$ be a rank $r$ Ulrich bundle on $(X,\mathcal{O}_X(H))$. Then $E$ is $H$--{\bf semistable}, i.e., for every non-zero sub-sheaf $\mathcal{F}\subseteq E$ we have that $\mu_H(\mathcal{F})\leq \mu_H(E)$, where
$$\mu_H(\mathcal{F}):=\frac{c_1(\mathcal{F})\cdot H^{n-1}}{\operatorname{rk}(\mathcal{F})}\in \mathbb{Q}.$$
This follows from Theorem \ref{thm:eisenbud}(4) (see also \cite[Theorem 2.9]{CHGS12}).
\end{itemize}

Finally, we mention that a nice class of Ulrich bundle are the so-called \emph{special} Ulrich bundles.

\begin{definition}\label{defi:special}
Let $(X,\mathcal{O}_X(H))$ be a smooth projective variety of dimension $n$. A rank 2 vector bundle $E$ on $X$ is called a {\bf special Ulrich bundle} (or Ulrich special) if it is an Ulrich bundle with respect to $H$ and $\det(E) = \omega_X\otimes \mathcal{O}_X((n+1)H)$.
\end{definition}

\subsection{Numerical characterization of uniruled and rationally connected varieties} \label{subsection:RC}
We refer the reader to \cite[Ch. 4]{Deb01} for an introduction to uniruled and rationally connected varieties, as well as their main properties.

We recall the main results concerning the semi-stability of sheaves with respect to a movable curve class, a notion introduced in \cite{CP11} and further developped in \cite{GKP16}. Let $\operatorname{N}_1(X)_\mathbb{R}$ be the real vector space of numerical curve classes on $X$.

\begin{definition} A curve class $\alpha\in \operatorname{N}_1(X)_\mathbb{R}$ is called {\bf movable} if $D\cdot \alpha \geq 0$ for every effective Cartier divisor $D$ on $X$. The set of movable classes form a closed convex cone $\operatorname{Mov}_1(X)\subseteq \operatorname{N}_1(X)_\mathbb{R}$, called the {\bf movable cone} of $X$.
\end{definition}

\begin{remark}
Since $X$ is smooth and projective, it follows from \cite{BDPP13} that $\operatorname{Mov}_1(X)$ is the closure of the convex cone in $\operatorname{N}_1(X)_\mathbb{R}$ generated by the classes of curves whose deformations cover a dense subset of $X$. Moreover, a numerical divisor class $[D]\in \operatorname{N}^1(X)_\mathbb{R}$ is pseudo-effective if and only if $D\cdot \alpha \geq 0$ for all $\alpha \in \operatorname{Mov}_1(X)$.
\end{remark}

Let $\mathcal{F}$ be a non-zero torsion-free coherent sheaf on $X$. Recall that $c_1(\mathcal{F})=(\bigwedge^r \mathcal{F})^{\vee \vee}$, where $r=\operatorname{rk}(\mathcal{F})\geq 1$ is the (generic) rank of $\mathcal{F}$. The {\bf slope} of $\mathcal{F}$ with respect to a movable curve class $\alpha\in \operatorname{Mov}_1(X)$ is defined by
$$ \mu_\alpha(\mathcal{F}):=\frac{c_1(\mathcal{F})\cdot \alpha}{\operatorname{rk}(\mathcal{F})}\in \mathbb{R}.$$
As before, we say that $\mathcal{F}$ is $\alpha$--{\bf semistable} if, for every non-zero subsheaf $\mathcal{G}\subseteq \mathcal{F}$, we have that $\mu_\alpha(\mathcal{G})\leq \mu_\alpha(\mathcal{F})$. 

As it was already observed in \cite{CP11} (see also \cite{GKP16}), many of the properties of classical slope semi-stability extend to this setting. For instance, the following quantities 
\begin{align*}
    \mu_\alpha^{\max}(\mathcal{F})&:=\sup\left\{\mu_\alpha(\mathcal{G}),\;\mathcal{G}\subseteq \mathcal{F} \textup{ non-zero coherent subsheaf} \right\},\\
    \mu_\alpha^{\min}(\mathcal{F})&:=\inf\left\{\mu_\alpha(\mathcal{Q}),\; \mathcal{F} \twoheadrightarrow \mathcal{Q} \textup{ non-zero torsion-free quotient} \right\},
\end{align*}
are finite, they satisfy $\mu_\alpha^{\max}(\mathcal{F})=-\mu_\alpha^{\min}(\mathcal{F}^\vee)$, and they can be computed by means of the {\bf Harder--Narasimhan filtration} of $\mathcal{F}$ with respect to $\alpha$. Namely, there exists a unique filtration
$$\operatorname{HN}_\bullet^\alpha(\mathcal{F}):\quad 0=\mathcal{F}_0 \subsetneq \mathcal{F}_1\subsetneq \cdots \subsetneq \mathcal{F}_\ell = \mathcal{F},$$
where each quotient $\mathcal{Q}_i:=\mathcal{F}_i\slash \mathcal{F}_{i-1}$ is $\alpha$--semistable, and $\mu_\alpha^{\max}(\mathcal{F})=\mu_\alpha(\mathcal{Q}_1)>\mu_\alpha(\mathcal{Q}_2)>\cdots >\mu_\alpha(\mathcal{Q}_\ell)=\mu_\alpha^{\min}(\mathcal{F})$. In particular, $\mathcal{F}$ is $\alpha$--semistable if and only if $\mu_\alpha^{\max}(\mathcal{F})=\mu_\alpha^{\min}(\mathcal{F})$.

Using the above notation, we can state the following remarkable results by Boucksom, Demailly, P\u{a}un and Peternell in \cite[Theorem 2.6]{BDPP13} and by Campana and P\u{a}un in \cite[Theorem 4.7]{CP19} (see also \cite[\S 1.5]{Cla17}).

\begin{theorem}\label{thm:BDPP+CP}
Let $X$ be a smooth projective manifold. Then
\begin{enumerate}
    \item $X$ is uniruled if and only if there exists $\alpha\in \operatorname{Mov}_1(X)$ such that $\mu_\alpha^{\max}(T_X)>0$;
    \item $X$ is rationally connected if and only if there exists $\alpha\in \operatorname{Mov}_1(X)$ such that $\mu_\alpha^{\min}(T_X)>0$.
\end{enumerate}
\end{theorem}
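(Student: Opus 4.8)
The plan is to deduce both equivalences from a small number of deep inputs: the duality between the pseudo-effective cone of divisors and $\operatorname{Mov}_1(X)$ recalled in the Remark above, the characterization of uniruled/rationally connected varieties through covering families of (very) free rational curves, and the generic (semi)positivity of the (co)tangent bundle along movable classes. I would separate the two ``positivity'' implications, which are elementary, from the two ``structure'' implications, which carry the real content of \cite{BDPP13,CP19}.

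For the forward directions I would argue directly with rational curves. If $X$ is uniruled, let $C$ be a general member of a covering family of rational curves, so that $C$ is free and $\alpha:=[C]\in\operatorname{Mov}_1(X)$; then $c_1(T_X)\cdot\alpha=-K_X\cdot C=\deg(T_X|_C)>0$, since $T_X|_C$ contains $T_C\cong\OO_C(2)$ and is globally generated, whence $\mu^{\max}_\alpha(T_X)\geq\mu_\alpha(T_X)=\tfrac{-K_X\cdot\alpha}{n}>0$. If instead $X$ is rationally connected, I would take a very free rational curve $C$ through a general point, so that $T_X|_C$ is ample; with $\alpha=[C]$, any torsion-free quotient $T_X\twoheadrightarrow\mathcal{Q}$ restricts on a general such $C$ to an ample quotient bundle of $T_X|_C$, giving $c_1(\mathcal{Q})\cdot\alpha=\deg(\mathcal{Q}|_C)\geq\operatorname{rk}(\mathcal{Q})$, hence $\mu^{\min}_\alpha(T_X)\geq 1>0$.

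The converse implications are the heart of the statement. For $(1)$ I would argue by contraposition: if $X$ is not uniruled, then $K_X$ is pseudo-effective, and the generic semipositivity theorem of Miyaoka, in the form strengthened by Campana--P\u{a}un, asserts that $\Omega^1_X$ is generically nef with respect to \emph{every} movable class; equivalently $\mu^{\max}_\alpha(T_X)=-\mu^{\min}_\alpha(\Omega^1_X)\leq 0$ for all $\alpha\in\operatorname{Mov}_1(X)$, contradicting the hypothesis. For $(2)$, assuming $\mu^{\min}_\alpha(T_X)>0$ but $X$ not rationally connected, I would run the maximal rationally connected fibration $\pi\colon X\dashrightarrow Y$; by Graber--Harris--Starr the base $Y$ has positive dimension and is not uniruled, so $K_Y$ is pseudo-effective. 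After resolving $\pi$ to a morphism on a smooth model and transporting $\alpha$, the relative cotangent sequence exhibits $\pi^*\Omega^1_Y$ as a subsheaf of $\Omega^1_X$ of slope $\tfrac{\pi^*K_Y\cdot\alpha}{\dim Y}\geq 0$ (as $\pi^*K_Y$ is pseudo-effective and $\alpha$ movable), so that $\mu^{\max}_\alpha(\Omega^1_X)\geq 0$, i.e. $\mu^{\min}_\alpha(T_X)\leq 0$, the desired contradiction; thus $\dim Y=0$ and $X$ is rationally connected.

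The main obstacle is precisely these two converse directions, whose engine is the generic semipositivity statement and the slope control along the MRC fibration. Proving that $K_X$ pseudo-effective forces $\Omega^1_X$ to be generically nef rests on the theory of foliations and their algebraic integrability (Miyaoka's original characteristic-$p$ bend-and-break argument, or the integrability criteria of Bogomolov--McQuillan as refined by Campana--P\u{a}un): a hypothetical destabilizing subsheaf of $T_X$ of positive slope would define a foliation of positive slope whose leaves are rationally connected, producing covering rational curves and contradicting non-uniruledness. The remaining delicate point, which I expect to absorb most of the technical work, is the birational bookkeeping required to pull back the movable class $\alpha$ and the pseudo-effective class $K_Y$ through the resolution of the MRC fibration while preserving the slope inequalities; this is exactly what is carried out in \cite[Theorem 4.7]{CP19} (see also \cite[\S1.5]{Cla17}).
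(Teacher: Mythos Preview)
The paper does not prove Theorem~\ref{thm:BDPP+CP}; it is quoted as a black box from the literature, with attribution to \cite[Theorem~2.6]{BDPP13} for part~(1) and \cite[Theorem~4.7]{CP19} for part~(2) (see also \cite[\S1.5]{Cla17}). There is therefore no ``paper's own proof'' to compare your proposal against.

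That said, your sketch is a faithful summary of how these results are actually established in the cited sources: the easy directions via (very) free rational curves, and the hard directions via the BDPP duality for the pseudo-effective cone together with the generic semipositivity of $\Omega_X^1$ (Miyaoka, refined by Campana--P\u{a}un) and the MRC fibration combined with Graber--Harris--Starr. You are also right to flag the birational bookkeeping in the converse of~(2) as the main technical cost. For the purposes of the present paper, however, none of this needs to be reproduced: the theorem is invoked only as an input in the proof of Theorem~\ref{thm:theefolds}.
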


\subsection{Rational homogeneous spaces with large automorphism groups}\label{subsection:G/P} \hfill

Recall that a {\bf rational homogeneous space} is a projective manifold $X$ given by the quotient $G/P$ of a semisimple complex Lie group $G$ and a parabolic subgroup $P\subseteq G$. In particular, if $G/P$ has Picard number one, then it follows that $G$ is a {\em simple} complex Lie group and $P$ is a {\em maximal} parabolic subgroup of $G$ (see e.g. \cite[\S 7.4.1]{Tev05}).

These manifolds can be classified in terms of the associated simple complex Lie algebra $\mathfrak{g}$ together with the marking of a single node in its corresponding Dynkin diagram. The dimension of the manifold $G/P_r$, where $P_r$ denotes the parabolic subgroup associated to the $r$-th node of the Dynkin diagram of $\mathfrak{g}$, can be found in \cite[\S 9.3]{Sno89} (where the simple roots are ordered according to Tits; we follow Bourbaki's ordering instead). 

We summarize the relevant information for us in Table \ref{table:G/P} below, and we refer the reader to \cite[Table 2]{MOSCWW} for the geometric description of each manifold $G/P_r$.

\begin{table}[h]
\begin{center}
\begin{tabular}{|c|c|c|c|c|c|c|c|c|c|c|c|}
\hline
Lie algebra $\mathfrak{g}$ & Dynkin diagram & $\dim_{\mathbb{C}}\mathfrak{g}$& \multicolumn{9}{c|}{$n=\dim_{\mathbb{C}}(G/P_r)$} \\ \hline
\multirow{2}{*}{$A_\ell\; (\ell\geq 1)$} & \multirow{2}{*}{\dynkin[labels={1,2,\ell-1,\ell},scale=2,mark=o]A{}} & \multirow{2}{*}{$\ell^2+2\ell$} & \multicolumn{9}{c|}{\multirow{2}{*}{$r(\ell+1-r)$}} \\ 
& & & \multicolumn{9}{c|}{}\\ \hline
\multirow{2}{*}{$B_\ell\; (\ell\geq 2)$} & \multirow{2}{*}{\dynkin[labels={1,2,\ell-2,\ell-1,\ell},scale=2,mark=o]B{}} & \multirow{2}{*}{$2\ell^2+\ell$} & \multicolumn{9}{c|}{\multirow{2}{*}{$\dfrac{r}{2}(4\ell+1-3r)$}} \\ 
& & & \multicolumn{9}{c|}{}\\ \hline
\multirow{2}{*}{$C_\ell\; (\ell\geq 3)$} & \multirow{2}{*}{\dynkin[labels={1,2,\ell-2,\ell-1,\ell},scale=2,mark=o]C{}} & \multirow{2}{*}{$2\ell^2+\ell$} & \multicolumn{9}{c|}{\multirow{2}{*}{$\dfrac{r}{2}(4\ell+1-3r)$}} \\ 
& & & \multicolumn{9}{c|}{}\\ \hline
\multirow{3}{*}{$D_\ell\; (\ell\geq 4)$} & \multirow{3}{*}{\dynkin[labels={1,2,\ell-3,\ell-2,\ell-1,\ell},label directions={,,,right,right,right},mark=o,scale=2,edgelength=2.5mm]D{}} & \multirow{3}{*}{$2\ell^2-\ell$} & \multicolumn{9}{c|}{\multirow{3}{*}{$\dfrac{r}{2}(4\ell-1-3r)$}} \\ 
& & &  \multicolumn{9}{c|}{} \\ 
& & & \multicolumn{9}{c|}{}\\ \hline
\multirow{2}{*}{$E_6$} & \multirow{2}{*}{\dynkin[mark=o,label,ordering=Bourbaki]E6} & \multirow{2}{*}{78} & $r$ & 1 & 2 & 3 & 4 & 5 & 6 &  \multicolumn{2}{c|}{} \\ 
 & & & $n$ & 16 & 21 & 25 & 29 & 25 & 16 &  \multicolumn{2}{c|}{} \\\hline
\multirow{2}{*}{$E_7$} & \multirow{2}{*}{\dynkin[mark=o,label,ordering=Bourbaki]E7} & \multirow{2}{*}{133} & $r$ & 1 & 2 & 3 & 4 & 5 & 6 & 7 &  \\
 & & & $n$ & 33 & 42 & 47 & 53 & 50 & 42 & 27 & \\ \hline
\multirow{2}{*}{$E_8$} & \multirow{2}{*}{\dynkin[mark=o,label,ordering=Bourbaki]E8} & \multirow{2}{*}{248}  & $r$ & 1 & 2 & 3 & 4 & 5 & 6 & 7 & 8 \\
 & & & $n$ & 78 & 92 & 98 & 106 & 104 & 97 & 83 & 57 \\ \hline
\multirow{2}{*}{$F_4$} & \multirow{2}{*}{\dynkin[mark=o,label,ordering=Bourbaki]F4} & \multirow{2}{*}{52}  & $r$ & 1 & 2 & 3 & 4 & \multicolumn{4}{c|}{}  \\
 & & & $n$ & 15 & 20 & 20 & 15 & \multicolumn{4}{c|}{} \\ \hline
\multirow{2}{*}{$G_2$} & \multirow{2}{*}{\dynkin[mark=o,label,ordering=Bourbaki]G2} & \multirow{2}{*}{14}  & $r$ & 1 & 2 & \multicolumn{6}{c|}{} \\
 & & & $n$ & 5 & 5 & \multicolumn{6}{c|}{} \\ \hline
\end{tabular}
\end{center}
\caption{Rational homogeneous spaces of Picard number one.}
\label{table:G/P}
\end{table}

Note that if $X$ is a smooth projective variety, then $\operatorname{Lie}(\operatorname{Aut}^\circ(X))\cong \operatorname{H}^0(X,T_X)$, where $\operatorname{Aut}^\circ(X)$ is the connected component of the identity in $\operatorname{Aut}(X)$. 

The automorphism groups of rational homogeneous manifolds $G\slash P$, where $G$ is a simple complex Lie group, have been extensively studied. Following Demazure \cite{Dem77}, a pair $(G,P)$ is non-exceptional if $\operatorname{Aut}^\circ(G/P) \cong G$. The exceptional cases (i.e., for which there is a different pair $(G',P')$ such that $G'\slash P' \cong G\slash P$) are well-known: they correspond geometrically to the odd-dimensional projective space $\mathbb{P}^{2\ell -1}$, the Spinor variety $\mathbb{S}_\ell$, and the smooth quadric hypersurface $\mathbb{Q}^5\subseteq \mathbb{P}^6$ (see e.g. \cite[Footnote 6]{Tit62} and \cite[\S 2]{Dem77}).

\begin{lemma}\label{lemma:Aut(G/P)}
Let $X\cong G\slash P$ be a rational homogeneous space of Picard number one and dimension $n$. Then we have
\[
\dim \operatorname{H}^0(X,T_X)\geq \frac{n(n+2)}{2}
\]
if and only if $X$ is isomorphic to $\mathbb{P}^n$, $\mathbb{Q}^n$ or $\operatorname{Gr}(2,5)$.
\end{lemma}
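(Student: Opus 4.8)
The plan is to convert the statement into a finite numerical comparison over the rows of Table~\ref{table:G/P}. Since $\operatorname{Lie}(\Aut^\circ(X))\cong \HH^0(X,T_X)$, the quantity to estimate is $\dim\Aut^\circ(X)$. By Demazure's theorem quoted above, for a \emph{non-exceptional} pair one has $\Aut^\circ(G/P)\cong G$, hence $\dim\HH^0(X,T_X)=\dim_{\CC}\mathfrak g$; for the three exceptional geometric types $\PP^{2\ell-1}$, the spinor variety $\mathbb S_\ell$ and $\mathbb Q^5$ I would pass to their primary (non-exceptional) descriptions $A_{2\ell-1}/P_1$, $D_\ell/P_\ell$ and $B_3/P_1$, so that in every case $\dim\HH^0(X,T_X)$ equals the value $\dim_{\CC}\mathfrak g$ recorded in the third column of the table. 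In this way the lemma reduces to the question: for which pairs $(\mathfrak g,r)$ does $\dim_{\CC}\mathfrak g\geq \tfrac12 n(n+2)$ hold, where $n=\dim(G/P_r)$?

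For the ``if'' direction I would simply evaluate both sides on the three candidates. For $\PP^n=A_n/P_1$ one has $\dim_{\CC}\mathfrak g=n(n+2)$, which exceeds $\tfrac12 n(n+2)$. For a quadric $\mathbb Q^n$, whose automorphism algebra is $\mathfrak{so}_{n+2}$ (of type $B$ or $D$), one gets $\dim_{\CC}\mathfrak g=\binom{n+2}{2}=\tfrac12(n+1)(n+2)>\tfrac12 n(n+2)$. Finally for $\operatorname{Gr}(2,5)=A_4/P_2$ we have $n=6$ and $\dim_{\CC}\mathfrak{sl}_5=24=\tfrac12\cdot 6\cdot 8$, so the inequality holds, and with equality; this is the unique boundary case.

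The substance is the ``only if'' direction, a finite check governed by a growth mismatch: $\dim_{\CC}\mathfrak g$ is quadratic in the rank $\ell$, whereas for an interior node $n=\dim(G/P_r)$ is itself quadratic in $\ell$, so $\tfrac12 n(n+2)$ grows quartically and eventually overtakes $\dim_{\CC}\mathfrak g$. Concretely I would argue type by type. For the exceptional algebras $E_6,E_7,E_8,F_4,G_2$ the value $\dim_{\CC}\mathfrak g$ is a fixed constant while the smallest $n$ occurring in each row already forces $\tfrac12 n(n+2)>\dim_{\CC}\mathfrak g$, the only survivor being $G_2/P_1\cong\mathbb Q^5$ (a quadric). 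For the classical families $A_\ell,B_\ell,C_\ell,D_\ell$ the extremal node $r=1$ produces precisely the projective spaces and quadrics (and, for type $C$, the projective spaces $\PP^{2\ell-1}$), for which the inequality holds. Since $r\mapsto n$ is concave, the set of nodes where the inequality survives is adjacent to the ends of the range, so using the diagram symmetries ($r\leftrightarrow \ell+1-r$ in type $A$, $P_{\ell-1}\leftrightarrow P_\ell$ in type $D$) I would reduce to the first interior node $r=2$ and the terminal node $r=\ell$. Substituting the table formulas turns the $r=2$ case into a quadratic inequality in $\ell$ with only finitely many integer solutions, while at the terminal node $n$ is of order $\ell^2$ (namely $\binom{\ell}{2}$ for the spinor variety $\mathbb S_\ell$, and $\binom{\ell+1}{2}$ for the Lagrangian Grassmannian $\operatorname{LG}(\ell,2\ell)$), so the inequality fails for all but the smallest ranks. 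Each surviving small-rank solution I would then identify with an item already on the list through the low-rank coincidences $\operatorname{Gr}(2,4)\cong\mathbb Q^4$, $\mathbb S_4\cong\mathbb Q^6$, $B_2/P_2\cong\PP^3$ and $\mathbb S_3\cong\PP^3$, leaving $\operatorname{Gr}(2,5)$ as the only genuinely new variety.

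I expect the main obstacle to be bookkeeping rather than any single hard estimate. One must read off the Lie-theoretic invariant correctly in the presence of the exceptional isomorphisms, so that no variety with an anomalously large automorphism group (the odd projective spaces in their $C_\ell$ guise, or the spinor varieties) is either missed or counted with the wrong value of $\dim\HH^0(X,T_X)$, and one must be careful with the two spinor nodes of $D_\ell$, whose dimension $\binom{\ell}{2}$ is not given by the naive interior-node formula. Once these identifications are fixed, what remains is the routine verification of finitely many polynomial inequalities in $\ell$ together with the handful of explicit constants attached to the exceptional algebras.
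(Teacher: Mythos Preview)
Your proposal is correct and takes essentially the same case-by-case approach as the paper, which also runs through Table~\ref{table:G/P} comparing $\dim_{\CC}\mathfrak{g}$ against $\tfrac12 n(n+2)$ type by type and identifying the low-rank coincidences. Your write-up is more explicit about the bookkeeping (invoking Demazure's theorem for the exceptional pairs, verifying the ``if'' direction separately, and using the concavity of $r\mapsto n$ to organize the classical cases), whereas the paper simply asserts the outcome of the same check.
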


\begin{proof}
Following the notation of Table \ref{table:G/P}, a straightforward case-by-case analysis shows that if $G$ is a classical Lie group of type
\begin{itemize}
    \item $A_\ell$, then the parabolic subgroup $P_r$ is associated to the node $r=1$ or $r=\ell$ (i.e., $X\cong \mathbb{P}^\ell$), unless $(r,\ell)\in \{(2,3),(2,4),(3,4)\}$. The latter cases correspond to $\operatorname{Gr}(2,4)\cong \mathbb{Q}^4\subseteq \mathbb{P}^5$ and $\operatorname{Gr}(2,5)\cong \operatorname{Gr}(3,5)$. 
    \item $B_\ell$, then the parabolic subgroup $P_r$ is associated to the node $r=1$ (i.e., $X\cong \mathbb{Q}^{2\ell}$), unless $(r,\ell)=(2,2)$. The latter case corresponds to $\mathbb{S}_2\cong \mathbb{P}^3$ (cf. \cite[Examples 2.1.9 (a)]{IP99}).
    \item $C_\ell$, then the parabolic subgroup $P_r$ is associated to the node $r=1$ (i.e., $X\cong \mathbb{P}^{2\ell-1}$).
    \item $D_\ell$, then the parabolic subgroup $P_r$ is associated to the node $r=1$ (i.e., $X\cong \mathbb{Q}^{2\ell-1}$), unless $(r,\ell)=(4,4)$. The latter case corresponds to $\mathbb{S}_3\cong \mathbb{Q}^6\subseteq \mathbb{P}^7$ (cf. \cite[Examples 2.1.9 (b)]{IP99}).
\end{itemize}
Similarly, we note that $G$ cannot be of type $E_\ell$, $F_4$ or $G_2$. 
\end{proof}

\begin{remark}
Suppose that $X$ is an $n$-dimensional Fano manifold of Picard number one (not necessarily rational homogeneous). It is expected that $\dim \operatorname{H}^0(X,T_X)\leq n^2+2n$ with equality if and only if $X\cong \mathbb{P}^n$ (see \cite[Conjecture 2]{HM05}). A positive result in this direction was recently obtained in \cite[Theorem 1.2]{FOX18} (cf. \cite[Theorem 1.3.2]{HM05}), where the authors proved that in the case where the {\em Variety of Minimal Rational Tangents} (VMRT) at a general point of $X$ is smooth irreducible and linearly non-degenerate, then $\dim \operatorname{H}^0(X,T_X)> n(n+1)\slash 2$ if and only if $X$ is isomorphic to $\mathbb{P}^n$, $\mathbb{Q}^n$ or $\operatorname{Gr}(2,5)$.

This condition on the VMRT holds true for rational homogeneous spaces which are {\bf irreducible Hermitian symmetric spaces} of compact type (see e.g. \cite[Main Theorem]{FH12}). These manifolds were classified by Cartan and they correspond to Grassmannians $\operatorname{Gr}(r,n)$, smooth quadric hypersurfaces $\mathbb{Q}^n \subseteq \mathbb{P}^n$, Lagrangian Grassmannians $\operatorname{Lag}(2n)$, Spinor varieties $\mathbb{S}_n$, the Cayley plane $\mathbb{OP}^2$, and the rational homogeneous space $E_7\slash P_1$ of dimension $27$.

However, as Baohua Fu kindly communicated to us, there are rational homogeneous spaces of Picard number one whose VMRT are linearly degenerate (see e.g. \cite[Table 2]{Rus12} and \cite[\S 1.4.6]{Hwa01}). We refer the interested reader to \cite{KSC06} and \cite{Hwa01} for comprehensive surveys of the theory of Varieties of Minimal Rational Tangents, developed by Hwang, Mok and Kebekus in \cite{HM99,Keb02,HM04}.
\end{remark}

\section{Chern classes of Ulrich vector bundles} \label{section:chern}

\begin{definition}\label{defi:dual}
Let $(X,\mathcal{O}_X(H))$ be a smooth projective variety of dimension $n$. For a vector bundle $E$ on $X$, we define its {\bf Ulrich dual} with respect to $H$ by
$$E^{\operatorname{ul}}:=E^\vee\otimes \mathcal{O}_X(K_X+(n+1)H). $$
In particular, it follows from Serre duality and Theorem \ref{thm:eisenbud}(3) that $E$ is an Ulrich bundle with respect to $H$ if and only if $E^{\operatorname{ul}}$ is an Ulrich bundle with respect to $H$.
\end{definition}

\begin{remark}
By Serre duality, the fact that $\operatorname{H}^0(X,E^{\operatorname{ul}}(-H))=0$ (cf. the \emph{initialized} condition) is equivalent to $\operatorname{H}^n(X,E(-nH))=0$. Moreover, since for every rank 2 vector bundle $E$ on a smooth projective variety $X$ we have that $E\cong E^\vee \otimes \det(E)$, it follows that $E$ is a special Ulrich bundle (see Definition \ref{defi:special}) if and only if $E\cong E^{\operatorname{ul}}$.
\end{remark}

With the above notation, we can restate the following characterization of Ulrich bundles on surfaces obtained by Casnati in \cite[Proposition 2.1, Corollary 2.2]{Cas17}.

\begin{proposition}[Casnati]\label{prop:casnati}
Let $(S,\mathcal{O}_S(H))$ be a smooth projective surface. For any vector bundle $E$ of rank $r$ on $S$, the following assertions are equivalent:
\begin{itemize}
\item[(a)] $E$ is an Ulrich bundle.
\item[(b)] $E$ is an aCM bundle and
\begin{equation}
    \tag{$\dag$} c_1(E)\cdot H = \frac{r}{2}(K_S+3H)\cdot H \quad\textup{and}\quad c_2(E)=\frac{1}{2}(c_1^2(E)-c_1(E)\cdot K_S)-r(H^2-\chi(S,\OO_S)). \label{eq:dag}
\end{equation}
\item[(c)] $h^0(S,E(-H))=h^0(S,E^{\operatorname{ul}}(-H))=0$ and the identities \emph{(}\ref{eq:dag}\emph{)} hold.
\end{itemize}
In particular, a rank two vector bundle $E$ on $S$ is a special Ulrich bundle if and only if $E$ is initialized and the identities
\[
\det(E)=\mathcal{O}_S(K_S+3H) \quad \textup{and} \quad c_2(E)=\frac{1}{2}(5H^2+3H\cdot K_S)+2\chi(S,\OO_S)
\]
hold.
\end{proposition}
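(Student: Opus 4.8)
The plan is to rephrase every condition purely in terms of the vanishing of the cohomology groups $\operatorname{H}^i(S,E(-jH))$ with $i\in\{0,1,2\}$ and $j\in\{1,2\}$, using Theorem \ref{thm:eisenbud}(2) as the working definition of the Ulrich property on the surface $S$, and to produce the numerical identities (\ref{eq:dag}) out of Riemann--Roch. The computational heart of the argument is the claim that, for \emph{any} rank $r$ bundle $E$ on $S$, the two identities in (\ref{eq:dag}) hold if and only if $\chi(S,E(-H))=\chi(S,E(-2H))=0$. To see this I would apply Riemann--Roch on $S$ in the form $\chi(S,F)=r\,\chi(S,\mathcal{O}_S)+\tfrac12\,c_1(F)\cdot(c_1(F)-K_S)-c_2(F)$ to $F=E(-H)$ and $F=E(-2H)$, inserting the standard twisting formulas for $c_1$ and $c_2$. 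This produces two linear relations among $c_1(E)\cdot H,\ c_1^2(E),\ c_1(E)\cdot K_S$ and $c_2(E)$; their difference isolates the first identity $c_1(E)\cdot H=\tfrac r2(K_S+3H)\cdot H$, and feeding it back into either relation yields the second one for $c_2(E)$, while conversely the two identities recover both Euler characteristics. This equivalence is what links the numerical and the cohomological sides.

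With this in hand, the equivalence (a)$\Leftrightarrow$(b) is immediate. If $E$ is Ulrich it is aCM (as recalled after Theorem \ref{thm:eisenbud}) and Theorem \ref{thm:eisenbud}(2) forces all six groups $\operatorname{H}^i(S,E(-jH))$ to vanish; in particular $\chi(S,E(-H))=\chi(S,E(-2H))=0$, whence (\ref{eq:dag}). Conversely, if $E$ is aCM and (\ref{eq:dag}) holds, then $\chi(S,E(-H))=\chi(S,E(-2H))=0$ while aCM gives $\operatorname{H}^1(S,E(-H))=\operatorname{H}^1(S,E(-2H))=0$; as $h^0+h^2=\chi=0$ with non-negative summands, every relevant group vanishes and $E$ is Ulrich by Theorem \ref{thm:eisenbud}(2).

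For (a)$\Leftrightarrow$(c) the goal is to trade the aCM hypothesis for the two vanishings $h^0(S,E(-H))=h^0(S,E^{\operatorname{ul}}(-H))=0$. Here I would use Serre duality together with Definition \ref{defi:dual}: since $E^{\operatorname{ul}}=E^\vee\otimes\mathcal{O}_S(K_S+3H)$, one has $\operatorname{H}^2(S,E(-jH))\cong\operatorname{H}^0(S,E^{\operatorname{ul}}(-(3-j)H))^\vee$, so that the two top cohomologies at $j=1,2$ are controlled by $h^0(S,E^{\operatorname{ul}}(-2H))$ and $h^0(S,E^{\operatorname{ul}}(-H))$. Multiplying by a nonzero section of the very ample $\mathcal{O}_S(H)$ gives injections $\operatorname{H}^0(S,F(-2H))\hookrightarrow\operatorname{H}^0(S,F(-H))$ for $F=E$ and $F=E^{\operatorname{ul}}$, so the hypotheses of (c) already force $h^0(S,E(-2H))=h^2(S,E(-H))=h^2(S,E(-2H))=0$. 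Combined with the vanishing of the two Euler characteristics (equivalent to (\ref{eq:dag})) this makes the remaining $\operatorname{H}^1$-terms vanish, and Theorem \ref{thm:eisenbud}(2) shows $E$ is Ulrich; the reverse implication is clear because $E$ is Ulrich if and only if $E^{\operatorname{ul}}$ is (Definition \ref{defi:dual}).

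Finally I would specialise to $r=2$ for the special-Ulrich criterion. For a rank $2$ bundle the equality $\det(E)=\mathcal{O}_S(K_S+3H)$ automatically yields the first identity of (\ref{eq:dag}), while substituting $c_1(E)=K_S+3H$ into $\tfrac12(c_1^2(E)-c_1(E)\cdot K_S)-2(H^2-\chi(S,\mathcal{O}_S))$ reproduces exactly $c_2(E)=\tfrac12(5H^2+3H\cdot K_S)+2\chi(S,\mathcal{O}_S)$. Moreover a rank $2$ bundle satisfies $E\cong E^\vee\otimes\det(E)$, so $\det(E)=\mathcal{O}_S(K_S+3H)$ gives $E\cong E^{\operatorname{ul}}$ and the two vanishings in (c) collapse to the single \emph{initialized} requirement (recalling that $h^0(S,E)=rd>0$ holds automatically for Ulrich bundles); this is precisely Definition \ref{defi:special}. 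I expect the only real care to be needed in the Riemann--Roch bookkeeping of the first paragraph and in tracking the twists through Serre duality and the Ulrich dual; conceptually the one genuinely useful point is that the two Euler-characteristic vanishings together with the two explicit $h^0$-vanishings of (c) already determine all six cohomology groups, so that aCM need not be imposed as an extra hypothesis.
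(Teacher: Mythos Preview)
Your proposal is correct and matches the approach the paper itself adopts. The paper does not actually reprove Proposition~\ref{prop:casnati} (it cites Casnati), but the surrounding material---Lemma~\ref{lemma:vanishing}, Lemma~\ref{lemma:aCM}, and the proof of the threefold analogue Proposition~\ref{prop:threefolds}---follows exactly your template: translate (\ref{eq:dag}) into the vanishing of $\chi(S,E(-H))$ and $\chi(S,E(-2H))$ via Hirzebruch--Riemann--Roch, use the aCM hypothesis together with non-negativity of $h^0+h^2$ for (b)$\Leftrightarrow$(a), and for (c)$\Rightarrow$(a) use Serre duality to identify $h^2(S,E(-jH))$ with $h^0(S,E^{\operatorname{ul}}(-(3-j)H))$ plus the injection $\operatorname{H}^0(S,F(-2H))\hookrightarrow\operatorname{H}^0(S,F(-H))$ coming from multiplication by a section of $\mathcal{O}_S(H)$.
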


Along the same lines, we can follow verbatim the proof of Casnati's formulas to obtain similar vanishing of certain twisted Euler characteristics of Ulrich bundles. Together with the Hirzebruch--Riemann--Roch Theorem, they give many restrictions on the Chern classes of Ulrich bundles.

Let us begin with the following observation concerning certain aCM bundles, which in the case of Ulrich bundles is a direct consequence of Theorem \ref{thm:eisenbud} (2) above.

\begin{lemma}\label{lemma:vanishing}
Let $(X,\mathcal{O}_X(H))$ be a smooth projective variety of dimension $n$. Let $E$ be an aCM bundle on $X$ with respect to $H$ such that $h^0(X,E(-H))=h^n(X,E(-nH))=0$, then
\[
\chi(X,E(-H))=\chi(X,E(-2H))=\cdots=\chi(X,E(-nH))=0.
\]
\end{lemma}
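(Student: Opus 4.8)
The plan is to show that an aCM bundle $E$ satisfying the stated vanishing conditions has $\HH^i(X,E(-jH))=0$ for all $i\geq 0$ and all $j\in\{1,\ldots,n\}$, which immediately forces $\chi(X,E(-jH))=0$ for each such $j$ since the Euler characteristic is the alternating sum of these vanishing cohomology groups. Thus the real content is to promote the two extremal vanishings $h^0(X,E(-H))=0$ and $h^n(X,E(-nH))=0$, together with the aCM hypothesis, into the full vanishing of \emph{all} cohomology in the relevant range.

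First I would record what the aCM condition gives directly: by definition $\HH^i(X,E(jH))=0$ for all $j\in\ZZ$ and all intermediate degrees $1\leq i\leq n-1$. In particular, for every $j\in\{1,\ldots,n\}$ the twisted sheaf $E(-jH)$ has vanishing cohomology in all middle degrees, so only $\HH^0$ and $\HH^n$ can possibly contribute to $\chi(X,E(-jH))$. This already reduces the problem to controlling the two extremal cohomology groups for each twist.

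Next I would handle the top cohomology $\HH^n$. The idea is that, away from being globally generated data, $\HH^n$ is governed by Serre duality: $\HH^n(X,E(-jH))\cong \HH^0(X,E^{\vee}(jH)\otimes\omega_X)^{\vee}$. Rewriting this in terms of the Ulrich dual $E^{\operatorname{ul}}=E^{\vee}\otimes\OO_X(K_X+(n+1)H)$, one gets $\HH^n(X,E(-jH))\cong \HH^0(X,E^{\operatorname{ul}}(-(n+1-j)H))^{\vee}$. For $j\in\{1,\ldots,n\}$ the integer $n+1-j$ ranges over $\{1,\ldots,n\}$ as well, so top cohomology vanishings for $E$ translate into $\HH^0$-vanishings for $E^{\operatorname{ul}}$ in exactly the symmetric range. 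Since $E^{\operatorname{ul}}$ is again aCM (duality preserves the middle vanishings, cf. Theorem \ref{thm:eisenbud} and Definition \ref{defi:dual}) and the hypothesis $h^n(X,E(-nH))=0$ says $h^0(X,E^{\operatorname{ul}}(-H))=0$, the problem is self-dual: it suffices to prove that an aCM bundle $F$ on $X$ with $h^0(X,F(-H))=0$ satisfies $h^0(X,F(-jH))=0$ for all $j\in\{1,\ldots,n\}$, and then apply this to both $F=E$ and $F=E^{\operatorname{ul}}$.

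The key step, and the main obstacle, is this monotonicity of $\HH^0$: I would argue that $h^0(X,F(-jH))=0$ for some $j\geq 1$ forces $h^0(X,F(-(j+1)H))=0$, by choosing a general hyperplane section $Y\in|H|$ and using the restriction sequence $0\to F(-(j+1)H)\to F(-jH)\to (F(-jH))|_Y\to 0$. A global section of $F(-(j+1)H)$ would inject into $\HH^0(X,F(-jH))$, which is zero by hypothesis, so in fact $h^0(X,F(-(j+1)H))=0$ follows immediately and the induction on $j$ starting from the base case $h^0(X,F(-H))=0$ closes at once. (The subtlety to watch is only that twisting down by an ample divisor cannot create new sections, which is precisely the injectivity $\HH^0(X,F(-(j+1)H))\hookrightarrow \HH^0(X,F(-jH))$ coming from multiplication by the section cutting out $Y$.) Combining the two dual halves then yields $\HH^0(X,E(-jH))=\HH^n(X,E(-jH))=0$ for every $j\in\{1,\ldots,n\}$, and with the middle vanishings from aCM-ness this gives $\chi(X,E(-jH))=0$ for all $j$ in the stated range, as desired.
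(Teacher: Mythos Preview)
Your proof is correct and follows essentially the same strategy as the paper: use the aCM hypothesis to kill the middle cohomology, then use the injection $\HH^0(X,F(-(j+1)H))\hookrightarrow \HH^0(X,F(-jH))$ coming from the hyperplane section sequence to propagate the $\HH^0$ vanishing, and handle the $\HH^n$ vanishings by Serre duality. The only cosmetic difference is that the paper treats $h^n$ directly via the inequality $h^n(X,E(-jH))\leq h^n(X,E(-(j+1)H))$ obtained from Serre duality, whereas you package this step through the Ulrich dual $E^{\operatorname{ul}}$; this is the same argument in slightly different notation.
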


\begin{proof}
Since $E$ is an aCM vector bundle, we have that
\[
h^1(X,E(-jH))=\cdots=h^{n-1}(X,E(-jH))=0 \textup{ for }j\in \{1,\ldots,n\}.
\]
On the other hand, it follows from the short exact sequence
\[
0\longrightarrow \mathcal{O}_X(-H)\longrightarrow \mathcal{O}_X \longrightarrow \mathcal{O}_H \longrightarrow 0
\]
that $h^0(X,E(-nH))\leq h^0(X,E(-(n-1)H))\leq \cdots \leq h^0(X,E(-H))=0$. Moreover, the vanishing $h^n(X,E(-nH))=0$ implies that $\chi(X,E(-nH))=0$. 

Similarly, Serre duality and the above short exact sequence give us that
\[
h^n(X,E(-jH))=h^0(X,E^\vee(K_X+jH))\leq h^0(X,E^\vee(K_X+(j+1)H))=h^n(X,E(-(j+1)H))
\]
for every $j\in \mathbb{Z}$, and hence $h^n(X,E(-H))\leq h^n(X,E(-2H))\leq \cdots \leq h^n(X,E(-nH))=0$. We conclude that $\chi(X,E(-H))=\chi(X,E(-2H))=\cdots=\chi(X,E(-(n-1)H))=0$ as well.
\end{proof}

It is worth noting that aCM bundles satisfy the following converse statement.

\begin{lemma}\label{lemma:aCM}
Let $(X,\mathcal{O}_X(H))$ be a smooth projective variety of \emph{even} (resp. \emph{odd}) dimension $n$, and let $E$ be an aCM bundle (resp. \emph{initialized} aCM bundle) on $X$ with respect to $H$. Assume that $\chi(X,E(-H))=\chi(X,E(-nH))=0$, then $E$ is an Ulrich bundle with respect to $H$.
\end{lemma}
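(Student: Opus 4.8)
The plan is to prove the converse of Lemma \ref{lemma:vanishing} by leveraging the aCM hypothesis to reduce the Ulrich condition, in the form of Theorem \ref{thm:eisenbud}(2), to showing that all the relevant cohomology groups vanish, and then to extract these vanishings from the two given Euler characteristic identities $\chi(X,E(-H))=\chi(X,E(-nH))=0$. Recall that for an aCM bundle we already have $h^i(X,E(-jH))=0$ for all $1\leq i\leq n-1$ and all $j\in\mathbb{Z}$, so the only cohomology that can survive in the range $j\in\{1,\ldots,n\}$ is $h^0$ and $h^n$. Hence to verify condition (2) of Theorem \ref{thm:eisenbud} it suffices to show that $h^0(X,E(-jH))=h^n(X,E(-jH))=0$ for every $j\in\{1,\ldots,n\}$.

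First I would set up the two monotonicity chains already used in the proof of Lemma \ref{lemma:vanishing}. Twisting the structure sequence $0\to\mathcal{O}_X(-H)\to\mathcal{O}_X\to\mathcal{O}_H\to 0$ by $E(-(j-1)H)$ and taking the long exact cohomology sequence gives the inequalities
\[
h^0(X,E(-nH))\leq h^0(X,E(-(n-1)H))\leq\cdots\leq h^0(X,E(-H)),
\]
so that a single top-level vanishing $h^0(X,E(-H))=0$ forces all of the $h^0(X,E(-jH))$ to vanish. Dually, applying Serre duality and the same sequence yields
\[
h^n(X,E(-H))\leq h^n(X,E(-2H))\leq\cdots\leq h^n(X,E(-nH)),
\]
so that $h^n(X,E(-nH))=0$ forces every $h^n(X,E(-jH))$ to vanish. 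Thus the whole argument reduces to establishing the two anchor vanishings $h^0(X,E(-H))=0$ and $h^n(X,E(-nH))=0$, after which Theorem \ref{thm:eisenbud}(2) applies directly.

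The point where the hypotheses are actually consumed is in producing these two anchors, and here the parity distinction and the \emph{initialized} assumption enter. Since $\chi(X,E(-nH))=\pm\big(h^0(X,E(-nH))-h^n(X,E(-nH))\big)+0$ (the middle cohomology vanishes by aCM), the identity $\chi(X,E(-nH))=0$ gives $h^0(X,E(-nH))=h^n(X,E(-nH))$; but the $h^0$ chain already shows $h^0(X,E(-nH))\leq h^0(X,E(-H))$, and similarly $\chi(X,E(-H))=0$ ties $h^0(X,E(-H))$ to $h^n(X,E(-H))$, with $h^n(X,E(-H))\leq h^n(X,E(-nH))$ by the dual chain. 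I would track the sign $(-1)^n$ carefully: when $n$ is even, $\chi(X,E(-H))=h^0(X,E(-H))+h^n(X,E(-H))$ (both contributions positive), so $\chi(X,E(-H))=0$ immediately forces both $h^0(X,E(-H))=0$ and $h^n(X,E(-H))=0$, and likewise $\chi(X,E(-nH))=0$ gives $h^0(X,E(-nH))=h^n(X,E(-nH))=0$; the two anchors then follow at once without any further input.

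The genuinely delicate case is $n$ odd, where $\chi(X,E(-jH))=\pm\big(h^0-h^n\big)$ is a difference rather than a sum, so a vanishing Euler characteristic only equates $h^0$ and $h^n$ rather than killing them individually. This is exactly why the statement requires $E$ to be \emph{initialized} in odd dimension: the condition $h^0(X,E(-H))=0$ is then built into the hypothesis, furnishing the first anchor directly and collapsing the entire $h^0$ chain. For the second anchor I would combine $\chi(X,E(-nH))=0$, which gives $h^0(X,E(-nH))=h^n(X,E(-nH))$, with the already-established $h^0(X,E(-nH))\leq h^0(X,E(-H))=0$ to conclude $h^n(X,E(-nH))=0$. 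I expect the main obstacle to be precisely this bookkeeping with the parity of $(-1)^n$ and verifying that the \emph{initialized} hypothesis is both necessary and sufficient to break the symmetry in the odd case; once the sign conventions are pinned down, the inequalities chain together mechanically and Theorem \ref{thm:eisenbud}(2) closes the argument.
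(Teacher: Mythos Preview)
Your proposal is correct and follows essentially the same route as the paper's proof: reduce via the aCM hypothesis and the monotonicity chains (obtained from the hyperplane sequence and Serre duality) to the two anchor vanishings $h^0(X,E(-H))=0$ and $h^n(X,E(-nH))=0$, then split on the parity of $n$, using the initialized hypothesis in the odd case to break the $h^0=h^n$ symmetry. The only cosmetic difference is that the paper cites Theorem~\ref{thm:eisenbud} and checks just these two anchors (in effect using condition~(3)), whereas you invoke condition~(2) and collapse the full $h^0$- and $h^n$-chains; the underlying argument is identical.
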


\begin{proof}
By definition of aCM bundle, for every $i\in \{1,\ldots,n-1\}$ we have that $\operatorname{H}^i(X,E(jH))=0$ for all $j\in \mathbb{Z}$. Therefore, in virtue of Theorem \ref{thm:eisenbud}, we are left to prove that 
\[
\operatorname{H}^0(X,E(-H))=\operatorname{H}^n(X,E(-nH))=0.
\]
If both quantities $\chi(X,E(-H))=h^0(X,E(-H))+(-1)^nh^n(X,E(-H))$ and $\chi(X,E(-nH))=h^0(X,E(-nH))+(-1)^nh^n(X,E(-nH))$ are zero, then the above vanishing conditions follow immediately when $n$ is even. On the other hand, when $n$ is odd, we obtain instead that
\[
h^0(X,E(-H))=h^n(X,E(-H)) \textup{ and }h^0(X,E(-nH))=h^n(X,E(-nH)).
\]
As already discussed during the proof of Lemma \ref{lemma:vanishing}, we have $h^0(X,E(-nH))\leq h^0(X,E(-H))$ and $h^n(X,E(-H))\leq h^n(X,E(-nH))$. In particular, if $E$ is initialized then $h^0(X,E(-H))=0$, and we deduce that $h^n(X,E(-nH))=0$ as well.
\end{proof}

It is worth mentioning that the first identity in (\ref{eq:dag}) and (\ref{eq:star}) (i.e., the one related with the \emph{first} Chern class of an Ulrich bundle) can be made more precise in some higher dimensional cases. In fact, Lopez showed in \cite[Lemma 3.2]{Lop20} that if $X$ is a smooth projective variety of dimension $n\geq 2$ such that $\operatorname{Pic}(X)\cong \mathbb{Z}$, then every rank $r$ Ulrich bundle on $(X,\mathcal{O}_X(H))$ satisfies
\[
c_1(E)=\frac{r}{2}(K_X+(n+1)H).
\]

We remark that we can drop the assumption on the Picard rank if we restrict ourselves to compute $c_1(E)\cdot H^{n-1}$ instead of $c_1(E)$.

\begin{lemma}\label{lemma:c1}
Let $(X,\mathcal{O}_X(H))$ be a smooth projective variety of dimension $n$, and let $E$ be an Ulrich bundle on $X$ with respect to $H$. Then,
\[
c_1(E)\cdot H^{n-1}=\frac{r}{2}(K_X+(n+1)H)\cdot H^{n-1}
\]
\end{lemma}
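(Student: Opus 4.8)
The plan is to use the vanishing of twisted Euler characteristics established in Lemma \ref{lemma:vanishing}, combined with the Hirzebruch--Riemann--Roch formula, to extract information about $c_1(E)\cdot H^{n-1}$. Since an Ulrich bundle $E$ is in particular aCM and satisfies $h^0(X,E(-H))=0$ together with $h^n(X,E(-nH))=0$ (the latter via Serre duality and the initialized condition on $E^{\operatorname{ul}}$), Lemma \ref{lemma:vanishing} applies and gives
\[
\chi(X,E(-jH))=0 \quad \text{for all } j\in\{1,\ldots,n\}.
\]
These are $n$ linear equations that I intend to exploit.

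First I would write down the Hirzebruch--Riemann--Roch expression for $\chi(X,E(-jH))$ as a polynomial in $j$. By HRR we have $\chi(X,E(-jH))=\int_X \operatorname{ch}(E)e^{-jH}\operatorname{td}(X)$, and expanding $e^{-jH}=\sum_k (-j)^k H^k/k!$ shows that $\chi(X,E(-jH))$ is a polynomial $P(j)$ of degree $n$ in the variable $j$ whose leading coefficient comes from the top power $H^n$. Crucially, this polynomial has exactly the $n$ roots $j=1,2,\ldots,n$ forced by the vanishing above, so
\[
\chi(X,E(-jH))=\frac{(-1)^n r\, d}{n!}\,(j-1)(j-2)\cdots(j-n),
\]
where the leading coefficient is fixed by matching the $H^n$ term, namely $\operatorname{rk}(E)\cdot H^n/n!=r d/n!$ up to sign.

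The key step is then to compare the coefficient of $j^{n-1}$ on both sides. On the factored side this coefficient is, up to the overall constant, $-(1+2+\cdots+n)=-n(n+1)/2$ times the leading coefficient. On the HRR side, the $j^{n-1}$ coefficient is the term of the expansion involving $H^{n-1}$ paired against $\operatorname{ch}(E)$ and $\operatorname{td}(X)$ in total degree $1$ beyond the top; concretely it mixes $c_1(E)\cdot H^{n-1}$ (from $\operatorname{ch}_1(E)=c_1(E)$) and $K_X\cdot H^{n-1}$ (from $\operatorname{td}_1(X)=-K_X/2$). Equating the two expressions for the $j^{n-1}$ coefficient and solving for $c_1(E)\cdot H^{n-1}$ should yield precisely
\[
c_1(E)\cdot H^{n-1}=\frac{r}{2}(K_X+(n+1)H)\cdot H^{n-1}.
\]

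The main obstacle I anticipate is purely bookkeeping: correctly isolating the coefficient of $j^{n-1}$ in the HRR integral and verifying that all other Chern-class contributions (such as $c_2(E)$, higher Chern classes, and the higher Todd terms) land in the coefficient of $j^{n-2}$ or lower, hence do not interfere with the $j^{n-1}$ comparison. The point is that $c_1(E)\cdot H^{n-1}$ and $K_X\cdot H^{n-1}$ are the only codimension-one-correction intersection numbers that can pair with $H^{n-1}$ to contribute at this order, so no information about $c_2(E)$ is needed. Once this dimensional/degree analysis is checked, the identity falls out immediately from Vieta's formula applied to the known root pattern, and since the argument only ever uses $H^{n-1}$-intersection numbers, no hypothesis on $\operatorname{Pic}(X)$ is required, matching the sharpening of Lopez's result asserted in the statement.
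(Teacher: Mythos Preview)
Your argument is correct and complete; the coefficient bookkeeping you anticipate is exactly as you describe, since the only degree-one contribution in $\operatorname{ch}(E)\operatorname{td}(X)$ is $c_1(E)-\tfrac{r}{2}K_X$, and matching against $-\tfrac{n(n+1)}{2}$ times the leading coefficient yields the stated identity.

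However, the paper proves this lemma by a genuinely different route. Rather than working directly on $X$, it cuts down by $n-2$ general hyperplane sections (Bertini) to a smooth surface $S\subseteq X$, notes that $E|_S$ is still Ulrich, and then invokes Casnati's surface formula \eqref{eq:dag} to get $c_1(E|_S)\cdot H|_S=\tfrac{r}{2}(K_S+3H|_S)\cdot H|_S$; adjunction $K_S=(K_X+(n-2)H)|_S$ and the projection-type identity for intersections on linear sections then lift this back to the statement on $X$. Your method is more self-contained: it bypasses Bertini, the restriction-stability of the Ulrich condition, and the separate surface case, extracting the formula in one stroke from the factorization $\chi(X,E(-jH))=\tfrac{(-1)^n rd}{n!}\prod_{k=1}^n(j-k)$ via Vieta. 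The paper's approach, by contrast, makes clear that the identity is literally the surface identity propagated upward, which fits its narrative of extending Casnati's results. Both arguments avoid any hypothesis on $\operatorname{Pic}(X)$, since only the intersection number against $H^{n-1}$ is computed.
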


\begin{proof}
Let $H_1,\ldots,H_{n-2}$ be general members in the linear system $|H|$ and let $Y_{n-j}:=H_1\cap H_2\cap \cdots \cap H_j$ for $j\in \{1,\ldots,n-2\}$. By Bertini theorem, each $Y_j$ is smooth irreducible of dimension $j$. First, we recall that (topological) Chern classes commute with arbitrary pullback and hence $c_1(E|_{Y_j})=c_1(E)|_{Y_j}$ in $\operatorname{H}^2(Y_j,\mathbb{R})$. In particular, by indutively applying \cite[Proposition 1.8(b)]{Deb01}, we have that
\[
c_1(E|_{S})\cdot H|_{S} = c_1(E)\cdot H^{n-1}, 
\]
where $S:=Y_2$. On the other hand, we know that each $E|_{Y_j}$ is an Ulrich bundle with respect to $H|_{Y_j}$ (see \S \ref{subsection:ulrich}) and hence Casnati's formulas (\ref{eq:dag}) in Proposition \ref{prop:casnati} give
\[
c_1(E|_S)\cdot H|_S = \frac{r}{2}(K_S+3H|_S)\cdot H|_S.
\]
Finally, since $\mathcal{N}_{S/X}\cong \mathcal{O}_X(H)|_S^{\oplus (n-2)}$, we deduce from the adjunction formula and \cite[Proposition 1.8(b)]{Deb01} that $(K_S+3H|_S)\cdot H|_S = (K_X+(n+1)H)\cdot H^{n-1}$.
\end{proof}

Finally, in order to extend Casnati's characterization of Ulrich bundles on surfaces to threefolds, we recall that if $E$ is a rank $r$ vector bundle on a smooth projective threefold $X$, then the Hirzebruch--Riemann--Roch theorem takes the following form
\begin{align*}
\chi(X,E)&=\int_X \operatorname{ch}(E)\operatorname{td}(X) \\
         &=r\chi(X,\OO_X)+\frac{1}{12}c_1(E)\cdot(K_X^2 + c_2(X))+ \frac{1}{4}(2 c_2(E)-c_1^2(E))\cdot K_X \\
         &\;\;\; +\frac{1}{6}(c_1^3(E)-3c_1c_2(E)+3c_3(E)),
\end{align*}
where $\chi(X,\OO_X)=-\frac{1}{24}K_X\cdot c_2(X)$. Additionally, if $L\cong \mathcal{O}_X(D)$ is a line bundle on $X$, then
\[
c_i(E\otimes \mathcal{O}_X(D))=\sum_{j=0}^i \binom{r-i+j}{j}c_{i-j}(E)D^j \quad \textup{in }\operatorname{H}^\ast(X,\mathbb{R})
\]
for every $i\geq 0$. In particular, for $j\in \mathbb{Z}$ and $D=-jH$, we get the following relations
\begin{align*}
    c_1(E(-jH))&=c_1(E)-jrH, \\
    c_2(E(-jH))&=c_2(E)-j(r-1)c_1(E)H+j^2\frac{r(r-1)}{2}H^2, \textup{ and}\\
    c_3(E(-jH))&=c_3(E)-j(r-2)c_2(E)H+j^2\frac{(r-1)(r-2)}{2}c_1(E)H^2-j^3\frac{r(r-1)(r-2)}{6}H^3.
\end{align*}

From these formulas, we can deduce the following characterization of Ulrich bundles on smooth projective threefolds.

\begin{proposition}\label{prop:threefolds}
Let $(X,\mathcal{O}_X(H))$ be a smooth projective threefold. For any rank $r$ vector bundle $E$ on $X$, the following are equivalent:
\begin{itemize}
    \item[(a)] $E$ is an Ulrich bundle.
    \item[(b)] $E$ is an initialized aCM bundle and the identities 
        \begin{equation}
        \tag{$\star$}  \label{eq:star}
        \begin{aligned}
        & c_1(E)\cdot H^2 = \frac{r}{2}H^2(K_X+4H), \\[2mm]
        & c_2(E)\cdot H  = \frac{r}{12}(K_X^2+c_2(X)-22H^2)\cdot H+\frac{1}{2}(c_1(E)-K_X)\cdot c_1(E)H, \\[2mm]         
        & c_3(E) = c_1(E)c_2(E)-\frac{1}{3}c_1^3(E)+\frac{1}{2}(c_1^2(E)-2c_2(E))\cdot K_X \\ 
        & \qquad\qquad -\frac{1}{6}c_1(E)(K_X^2+c_2(X))+2r(H^3-\chi(X,\mathcal{O}_X)),
        \end{aligned}
        \end{equation}
    hold.
    \item[(c)] $h^0(X,E(-H))=h^1(X,E(-H))=h^1(X,E^{\operatorname{ul}}(-H))=h^1(X,E(-2H))=0$ and \emph{(}\ref{eq:star}{)} hold.
\end{itemize}
In particular, a rank two  vector bundle $E$ on $X$ is a special Ulrich bundle if and only if $h^0(X,E(-H))=h^1(X,E(-H))=h^1(X,E(-2H))=0$ and the identities
\[
\det(E) = \mathcal{O}_X(K_X + 4H) \quad \textup{and} \quad c_2(E)\cdot H = \frac{1}{6}(K_X^2+c_2(X))\cdot H + 2H^2\cdot K_X + \frac{13}{3}H^3
\]
hold.
\end{proposition}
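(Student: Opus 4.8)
The plan is to reduce every implication to the vanishing of the three Euler characteristics $\chi(X,E(-jH))$, $j\in\{1,2,3\}$, taking advantage of the fact that $n=3$ is \emph{odd}, so that Lemmas \ref{lemma:vanishing} and \ref{lemma:aCM} apply. The heart of the matter is the translation: for the given data, the identities \emph{(}\ref{eq:star}\emph{)} are equivalent to $\chi(X,E(-jH))=0$ for $j=1,2,3$. To see this I would expand $\chi(X,E(-jH))$ using the Hirzebruch--Riemann--Roch formula and the twisting formulas recorded above, obtaining a cubic polynomial $P(j)=\chi(X,E(-jH))=Aj^3+Bj^2+Cj+D$ in $j$, whose leading coefficient evaluates to $A=-\tfrac{1}{6}rH^3$ and depends only on $r$ and $d=H^3$. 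Requiring $P(1)=P(2)=P(3)=0$ is then equivalent to $B=rd$, $C=-\tfrac{11}{6}rd$ and $D=rd$, and the decisive observation, to be checked by a direct bookkeeping of the coefficients, is that this system is \emph{triangular}: among the three quantities $c_1(E)\cdot H^2$, $c_2(E)\cdot H$ and $c_3(E)$, only $c_1(E)\cdot H^2$ enters $B$, only $c_2(E)\cdot H$ enters $C$, and only $c_3(E)$ enters $D$. One then verifies that $B=rd$ is exactly the first line of \emph{(}\ref{eq:star}\emph{)}, that $C=-\tfrac{11}{6}rd$ is the second, and that $D=rd$ is the third.

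Granting this translation, the equivalence $(a)\Leftrightarrow(b)$ is immediate. If $E$ is Ulrich, then it is initialized and aCM, and Lemma \ref{lemma:vanishing} gives $\chi(X,E(-jH))=0$ for $j=1,2,3$, hence \emph{(}\ref{eq:star}\emph{)}; this is $(a)\Rightarrow(b)$. Conversely, if $E$ is an initialized aCM bundle satisfying \emph{(}\ref{eq:star}\emph{)}, then the three Euler characteristics vanish, in particular $\chi(X,E(-H))=\chi(X,E(-3H))=0$, and Lemma \ref{lemma:aCM} in its odd-dimensional form yields that $E$ is Ulrich, giving $(b)\Rightarrow(a)$.

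For $(a)\Leftrightarrow(c)$ I would first use Serre duality and Definition \ref{defi:dual} to rewrite the unfamiliar vanishing as $h^1(X,E^{\operatorname{ul}}(-H))=h^1(X,E^\vee(K_X+3H))=h^2(X,E(-3H))$. The implication $(a)\Rightarrow(c)$ is then clear, since an Ulrich bundle satisfies all of Theorem \ref{thm:eisenbud}(2) together with \emph{(}\ref{eq:star}\emph{)} by the previous step. For $(c)\Rightarrow(a)$, note that \emph{(}\ref{eq:star}\emph{)} again gives $\chi(X,E(-jH))=0$ for $j=1,2,3$, and I would extract the twelve vanishings of Theorem \ref{thm:eisenbud}(2) by a purely cohomological argument. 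Writing $h^i_j:=h^i(X,E(-jH))$, the hypotheses read $h^0_1=h^1_1=h^1_2=h^2_3=0$. Twisting the sequence $0\to\OO_X(-H)\to\OO_X\to\OO_H\to 0$ shows that $h^0_1\geq h^0_2\geq h^0_3$ and, via Serre duality, that $h^3_1\leq h^3_2\leq h^3_3$; hence all $h^0_j=0$. The key step is that $\chi(X,E(-3H))=0$ together with $h^0_3=h^2_3=0$ forces $h^1_3+h^3_3=0$, so $h^1_3=h^3_3=0$; then $h^3_1=h^3_2=0$ by monotonicity, and finally $\chi(X,E(-H))=\chi(X,E(-2H))=0$ give $h^2_1=h^3_1=0$ and $h^2_2=h^3_2=0$. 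Collecting these vanishings, $E$ is Ulrich.

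Finally I would deduce the rank-two special case by specialization. For $r=2$ one has $E^\vee\otimes\det(E)\cong E$, so the hypothesis $\det(E)=\OO_X(K_X+4H)$ gives $E\cong E^{\operatorname{ul}}$; this makes the vanishing $h^1(X,E^{\operatorname{ul}}(-H))=h^1(X,E(-H))$ redundant. Moreover, for such $E$ Serre duality yields the symmetry $\chi(X,E(-jH))=-\chi(X,E(-(4-j)H))$, which forces $\chi(X,E(-2H))=0$ automatically and, writing $P$ in the symmetric form $P(j)=A(j-2)^3+\gamma(j-2)$ for a suitable $\gamma$, makes the first identity of \emph{(}\ref{eq:star}\emph{)} a consequence of the value of $A$ and the third a consequence of the second. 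What remains is the second identity, which under $r=2$ and $c_1(E)=K_X+4H$ collapses to the stated formula for $c_2(E)\cdot H$; combined with $(c)\Rightarrow(a)$ this shows $E$ is special Ulrich, and the converse is immediate from $(a)\Rightarrow(c)$. I expect the main obstacle to be the bookkeeping of the first step --- confirming both the value $A=-\tfrac16 rd$ and the triangular matching of $B,C,D$ with the three lines of \emph{(}\ref{eq:star}\emph{)} --- after which the remaining arguments are routine Euler-characteristic and monotonicity manipulations.
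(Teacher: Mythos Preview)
Your proposal is correct and follows essentially the same approach as the paper: both reduce the problem to the equivalence between the identities \emph{(}\ref{eq:star}\emph{)} and the vanishing of $\chi(X,E(-jH))$ for $j\in\{1,2,3\}$ via Hirzebruch--Riemann--Roch, then invoke Lemmas \ref{lemma:vanishing} and \ref{lemma:aCM} for $(a)\Leftrightarrow(b)$ and the same Serre-duality/monotonicity chase for $(c)\Rightarrow(a)$. Your organization of the HRR computation as a cubic $P(j)$ with a ``diagonal'' matching of $B,C,D$ to the three lines of \emph{(}\ref{eq:star}\emph{)} is equivalent to the paper's solving of the linear system $\Delta_2-\Delta_1=\Delta_3-\Delta_2=\chi(X,E)+\Delta_1=0$; the only genuine addition is your explicit treatment of the special Ulrich case via the symmetry $\chi(X,E(-jH))=-\chi(X,E(-(4-j)H))$, which the paper leaves implicit.
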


\begin{proof}
If we assume that $E$ is Ulrich, Theorem \ref{thm:eisenbud} and Lemma \ref{lemma:vanishing} imply that $E$ is an aCM bundle and 
\[
\chi(X,E(-H))=\chi(X,E(-2H))=\chi(X,E(-3H))=0.
\]
On the other hand, it follows from the Hirzebruch--Riemann--Roch theorem and the discussion above that $\chi(X,E(-jH))=\chi(X,E)+\Delta_j$, where
\begin{align*}
\Delta_j&:=\frac{1}{12}jH\cdot (12c_2(E)+6c_1(E)K_X-6c_1^2(E)-rK_X^2-rc_2(X) )\\
& \;\;\quad +\frac{1}{4}j^2H^2\cdot(2c_1(E)-rK_X)-\frac{1}{6}j^3H^3r
\end{align*}
for every $j\in \mathbb{Z}$. In particular, the identities (\ref{eq:star}) are obtained simply by solving the linear system which is determined by the relations $\Delta_2-\Delta_1=\Delta_3-\Delta_2=\chi(X,E)+\Delta_1=0$. We conclude in this way that (a) implies (b), and that (a) implies (c).

Suppose now that $E$ is an initialized aCM bundle and that the identities (\ref{eq:star}) hold. In particular, the identities (\ref{eq:star}) implies that $\chi(X,E(-jH))=0$ for $j\in \{1,2,3\}$. The fact that $E$ is Ulrich follows therefore from Lemma \ref{lemma:aCM}. In other words, (b) implies (a).

Note that (b) implies (c), since $h^1(X,E^{\operatorname{ul}}(-H))=h^2(X,E(-3H))$ by Serre duality, and hence the vanishing of higher cohomology follows directly from the aCM assumption. 

Finally, let us check that (c) implies (a). To do so, thanks to Theorem \ref{thm:eisenbud} and our assumptions, we are left to check the vanishing conditions
\[
\operatorname{H}^2(X,E(-2H))=\operatorname{H}^3(X,E(-3H))=0.
\]
Moreover, as in the proof of Lemma \ref{lemma:vanishing}, we have that $h^0(X,E(-H))=0$ implies $h^0(X,E(-jH))=0$ for all $j\in \mathbb{N}_{\geq 1}$. As before, the identities (\ref{eq:star}) imply $\chi(X,E(-jH))=0$ for $j\in \{1,2,3\}$ and in particular we have $0 = \chi(X,E(-3H))=-h^1(X,E(-3H))-h^3(X,E(-3H))$. It follows that $h^3(X,E(-3H))=0$ and, as in the proof of Lemma \ref{lemma:vanishing}, that $h^3(X,E(-2H))=0$. The identity $\chi(X,E(-2H))=0$ implies thus $h^2(X,E(-2H))=h^1(X,E(-2H))=0$, where the last vanishing holds by assumption.
\end{proof}

\begin{example}\label{ex:Fano 3folds}
Let us illustrate a possible use of Proposition \ref{prop:threefolds} by considering the special Ulrich bundles on Fano threefolds of even index, constructed by Beauville in \cite[\S 6]{Bea18}.

Let $X$ be a smooth Fano threefold such that $-K_X=2H$ with $H$ a very ample divisor, and let us consider the associated embedding $X \subseteq \mathbb{P}^{d+1}$, where $d=\deg(X)=H^3$. If we assume that the Fano index of $X$ is exactly $2$, then it follows that $d\in \{3,4,5,6,7\}$ and all possible threefolds are classified (see \cite[Theorem 3.3.1]{IP99}).

If one tries to construct a special Ulrich bundle $E$ by means of the Cayley-Bacharach property (see e.g. \cite[Theorem 5.1.1]{HL10}) or the Hartshorne-Serre construction (see e.g. \cite[Theorem 1.1]{Arr07}), then we need to find a local complete intesection curve $\Gamma \subseteq X$ whose ideal sheaf $\mathcal{I}_\Gamma$ fits in an exact sequence of sheaves
\[
0 \longrightarrow \mathcal{L} \longrightarrow E \longrightarrow \mathcal{M}\otimes \mathcal{I}_\Gamma \longrightarrow 0,
\]
where $\mathcal{L},\mathcal{M}\in \operatorname{Pic}(X)$ are line bundles. If we consider $\Gamma$ to be the zero locus of a general section $s\in \operatorname{H}^0(X,E)$, then we have
\[
0 \longrightarrow \mathcal{O}_X \xrightarrow{\;\; \cdot s \;\;} E \longrightarrow \mathcal{O}_X(2H)\otimes \mathcal{I}_\Gamma \longrightarrow 0,
\]
since $\det(E)=\mathcal{O}_X(K_X+4H)=\mathcal{O}_X(2H)$. By the Ulrich condition $\Gamma$ is an elliptic curve (see \cite[Remark 6.3.(3)]{Bea18}). The identities (\ref{eq:star}) imply that
\[
\deg(\Gamma)=c_2(E)\cdot H = \frac{1}{6}(K_X^2+c_2(X))\cdot H + 2H^2\cdot K_X + \frac{13}{3}H^3 = \frac{1}{6}(4d+12)-4d+\frac{13}{3}d = d+2.
\]
The existence of such elliptic curve of degree $d+2$ is proved by using deformation theory in \cite[Lemma 6.2]{Bea18}. The vanishing conditions $h^0(X,E(-H))=h^1(X,E(-H))=h^1(X,E(-2H))=0$ are verified in \cite[Proposition 6.1]{Bea18}. 
\end{example}

\begin{corollary}\label{coro:K=0}
Let $(X,\mathcal{O}_X(H))$ be a smooth projective threefold with $c_1(X)=0$. Assume that $E$ is a special Ulrich bundle with respect to $H$, then
\[
12c_2(E)\cdot H - 13c_1(E)\cdot H^2 = 2c_2(X)\cdot H.
\]
In particular, if $X$ is an Abelian threefold then $12c_2(E)\cdot H = 13 c_1(E)\cdot H^2$ and the general section $s\in \operatorname{H}^0(X,E)$ defines a smooth connected curve $\Gamma=V(s)\subseteq X$ of genus $g(\Gamma)=2\deg(\Gamma)+1$.

\end{corollary}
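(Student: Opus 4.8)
The plan is to prove Corollary \ref{coro:K=0} by specializing the last formula of Proposition \ref{prop:threefolds} to the case $c_1(X)=0$ (equivalently $K_X=0$ in the relevant intersection numbers, and $\chi(X,\OO_X)=-\tfrac{1}{24}K_X\cdot c_2(X)=0$). For a special Ulrich bundle $E$ of rank two, Proposition \ref{prop:threefolds} gives $c_2(E)\cdot H=\tfrac{1}{6}(K_X^2+c_2(X))\cdot H+2H^2\cdot K_X+\tfrac{13}{3}H^3$. Setting $K_X=0$ in every term reduces this to $c_2(E)\cdot H=\tfrac{1}{6}c_2(X)\cdot H+\tfrac{13}{3}H^3$. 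Separately, the first identity of the special-Ulrich part of Proposition \ref{prop:threefolds} reads $\det(E)=\mathcal{O}_X(K_X+4H)$, so when $K_X=0$ we get $c_1(E)=4H$ and hence $c_1(E)\cdot H^2=4H^3$, i.e.\ $H^3=\tfrac{1}{4}c_1(E)\cdot H^2$. Substituting this back yields $c_2(E)\cdot H=\tfrac{1}{6}c_2(X)\cdot H+\tfrac{13}{12}c_1(E)\cdot H^2$, and clearing denominators (multiply by $12$) gives exactly $12c_2(E)\cdot H-13c_1(E)\cdot H^2=2c_2(X)\cdot H$, as desired.

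For the Abelian threefold case, I would invoke the classical fact that an abelian variety is parallelizable, so its tangent bundle is trivial; in particular all Chern classes of $X$ vanish, giving $c_2(X)=0$. Plugging $c_2(X)\cdot H=0$ into the identity just established immediately yields $12c_2(E)\cdot H=13c_1(E)\cdot H^2$, the first claimed equality.

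It remains to analyze the curve $\Gamma=V(s)$ cut out by a general section $s\in \HH^0(X,E)$. Since $E$ is globally generated (as any Ulrich bundle is $0$-regular, hence globally generated, as recorded in \S \ref{subsection:ulrich}), a general section of a rank two bundle on a threefold vanishes along a smooth curve; I would use this together with Bertini-type genericity to guarantee $\Gamma$ is smooth, and connectedness follows from the Koszul/Hartshorne--Serre sequence $0\to\OO_X\xrightarrow{\cdot s}E\to\mathcal{I}_\Gamma\otimes\det(E)\to 0$ (as in Example \ref{ex:Fano 3folds}) together with $\HH^1(X,\OO_X)$-considerations or the initialized condition. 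The degree is $\deg(\Gamma)=c_2(E)\cdot H$. The genus is computed via the adjunction formula on the threefold: from the Koszul resolution one has the standard identity $\omega_\Gamma\cong (\omega_X\otimes\det(E))|_\Gamma$, so $2g(\Gamma)-2=\deg_\Gamma(\omega_X\otimes\det(E))=(K_X+\det(E))\cdot[\Gamma]$. With $K_X=0$ and $\det(E)=4H$, this gives $2g(\Gamma)-2=4H\cdot c_2(E)=4\,c_2(E)\cdot H$, and $[\Gamma]=c_2(E)$ as a cycle class. I would then express $4\,c_2(E)\cdot H$ in terms of $\deg(\Gamma)=c_2(E)\cdot H$ using the relation $12c_2(E)\cdot H=13c_1(E)\cdot H^2=13\cdot 4H^3=52H^3$, so $c_2(E)\cdot H=\tfrac{13}{3}H^3$; careful bookkeeping of these intersection numbers should produce $2g(\Gamma)-2=4\deg(\Gamma)$, i.e.\ $g(\Gamma)=2\deg(\Gamma)+1$.

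The main obstacle I anticipate is not any single hard estimate but rather the bookkeeping of intersection numbers in the genus computation: one must correctly identify the cycle class $[\Gamma]=c_2(E)$, apply the adjunction/Koszul formula $2g-2=(K_X+c_1(E))\cdot c_2(E)$ on the threefold, and then reconcile the resulting quantity $c_1(E)\cdot c_2(E)$ with the numerical data controlled only through intersection with $H$. The subtle point is that Proposition \ref{prop:threefolds} controls $c_2(E)\cdot H$ but the genus naturally involves $c_1(E)\cdot c_2(E)$, so I would need to use $c_1(E)=4H$ (valid as a \emph{divisor class}, not merely numerically up to $H$, because $\det(E)=\OO_X(4H)$ holds on the nose for a special Ulrich bundle) to rewrite $c_1(E)\cdot c_2(E)=4H\cdot c_2(E)=4\deg(\Gamma)$ exactly, which then closes the computation cleanly.
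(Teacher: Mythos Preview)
Your derivation of the identity $12c_2(E)\cdot H - 13c_1(E)\cdot H^2 = 2c_2(X)\cdot H$ and its specialization to the Abelian case match the paper exactly.

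For the genus of $\Gamma$ you take a genuinely different route. The paper computes $g(\Gamma)=h^1(\Gamma,\mathcal{O}_\Gamma)$ cohomologically: it twists the Koszul sequence by $\mathcal{O}_X(-4H)$, uses the aCM property of $E$ and Kodaira vanishing to obtain $h^1(X,\mathcal{I}_\Gamma)=0$ and $h^2(X,\mathcal{I}_\Gamma)=\tfrac{26d}{3}+h^3(X,\mathcal{I}_\Gamma)$, and then reads off $h^0(\Gamma,\mathcal{O}_\Gamma)=1$ and $h^1(\Gamma,\mathcal{O}_\Gamma)=\tfrac{26d}{3}+1$ from the ideal-sheaf sequence. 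Your adjunction argument, using $\omega_\Gamma\cong (\omega_X\otimes\det E)|_\Gamma$ with $[\Gamma]=c_2(E)$ and $c_1(E)=4H$ to get $2g-2=4H\cdot c_2(E)=4\deg(\Gamma)$, is more direct and avoids all of that cohomology once connectedness is in hand.

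However, your sketch of connectedness has a gap. You suggest it follows from the Koszul sequence ``together with $\operatorname{H}^1(X,\mathcal{O}_X)$-considerations or the initialized condition,'' but for an Abelian threefold $h^1(X,\mathcal{O}_X)=3\neq 0$, so that consideration does not help, and the initialized condition $h^0(X,E(-H))=0$ is not directly relevant either. What is actually needed is $h^1(X,\mathcal{I}_\Gamma)=0$, which the paper extracts from the twisted Koszul sequence $0\to\mathcal{O}_X(-4H)\to E(-4H)\to\mathcal{I}_\Gamma\to 0$ using the aCM vanishing $h^1(X,E(-4H))=0$ together with $h^2(X,\mathcal{O}_X(-4H))=0$ (Kodaira). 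Once $h^1(X,\mathcal{I}_\Gamma)=0$, the ideal-sheaf sequence gives $h^0(\Gamma,\mathcal{O}_\Gamma)=h^0(X,\mathcal{O}_X)=1$ and $\Gamma$ is connected. With this correction, your adjunction route is a valid and arguably cleaner alternative to the paper's cohomological computation of the genus.
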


\begin{proof}
The identity $12c_2(E)\cdot H - 13c_1(E)\cdot H^2 = 2c_2(X)\cdot H$ follows directly from Proposition \ref{prop:threefolds}, since $K_X\cdot H^2=0$ and $c_1(E)=4H$ in this case. If $X$ is an Abelian threefold then we have that $c_2(X)=0$, and hence $12c_2(E)\cdot H = 13 c_1(E)\cdot H^2$. Finally, if we consider $s\in \operatorname{H}^0(X,E)$ a general section and $\Gamma:=V(s)$, then it follows from Bertini theorem that $\Gamma$ is a smooth projective curve. We observe that in this case we have a short exact sequences of sheaves
\[
0\longrightarrow \mathcal{O}_X \xrightarrow{\;\;\cdot s \;\;} E \longrightarrow \mathcal{O}_X(4H)\otimes \mathcal{I}_{\Gamma}\longrightarrow 0,
\]
as $\det(E) = \mathcal{O}_X(4H)$ in $\operatorname{Pic}(X)$. From the twisted short exact sequence 
\[
0\longrightarrow \mathcal{O}_X(-4H) \longrightarrow E(-4H) \longrightarrow \mathcal{I}_{\Gamma}\longrightarrow 0
\]
we deduce that $h^1(X,\mathcal{I}_\Gamma)=0$ and $h^2(X,\mathcal{I}_\Gamma)=\frac{26d}{3}+h^3(X,\mathcal{I}_\Gamma)$. Indeed, the Ulrich condition implies that $h^i(X,E(-4H))=0$ for $i\leq 2$, the Kodaira vanishing implies that $h^i(X,\mathcal{O}_X(-4H))=0$ for $i\leq 2$, and $h^3(X,\mathcal{O}_X(-4H))=\frac{32d}{3}$ (resp. $h^3(X,E(-4H))=2d$) by Serre duality and Hirzebruch--Riemann--Roch (resp. since $E\cong E^{\operatorname{ul}}$ is Ulrich special). Similarly, the short exact sequence
\[
0 \longrightarrow \mathcal{I}_\Gamma \longrightarrow \mathcal{O}_X \longrightarrow \iota_\ast \mathcal{O}_\Gamma \longrightarrow 0
\]
associated to the closed embedding $\iota:\Gamma \hookrightarrow X$ implies that $h^0(\Gamma,\mathcal{O}_\Gamma)=h^3(X,\mathcal{I}_\Gamma)=1$ and $h^1(\Gamma,\mathcal{O}_\Gamma)=h^2(X,\mathcal{I}_\Gamma)$. From this, we deduce that $\Gamma$ is a connected curve of genus $g(\Gamma)=\frac{26d}{3}+1=2c_2(E)\cdot H + 1 = 2\deg(\Gamma)+1$.
\end{proof}

\section{Projective manifolds whose tangent bundle is Ulrich}\label{section:TX Ulrich}

In this section we address the main problem of the article. Namely, we would like to classify all the pairs $(X,\mathcal{O}_X(H))$ such that the tangent bundle $T_X$ (resp. the cotangent bundle $\Omega_X^1$) is an Ulrich bundle with respect to $H$, where $X\subseteq \mathbb{P}^N$ is a smooth projective variety of dimension $n$ and $H$ is a very ample divisor on $X$. As usual, we will denote by $d:=\deg(X)=H^n \geq 1$ the degree of $X$. 

We carry out an analysis depending on the dimension of $X$.

\subsection{Curves}\label{subsection:curves} \hfill

Let $E$ be a vector bundle on a smooth projective curve $(C,\mathcal{O}_C(H))$ of degree $d=\deg(H)$. It follows from Theorem \ref{thm:eisenbud} that $E$ is an Ulrich bundle if and only if 
\[
h^0(C,E(-H))=h^1(C,E(-H))=0. 
\]
Since $T_C\cong \omega_C^\vee$ and $\Omega_C^1 \cong \omega_C$ in this case, where $\omega_C$ is the canonical bundle of $C$, we deduce: 

\begin{proposition}\label{prop:curves}
Let $(C,\mathcal{O}_C(H))$ as above. Then $\Omega_C^1$ is never an Ulrich bundle, and $T_C$ is an Ulrich bundle if and only if $C$ is the {\em twisted cubic} in $\mathbb{P}^3$, i.e., $(C,\mathcal{O}_C(H))\cong (\mathbb{P}^1, \mathcal{O}_{\mathbb{P}^1}(3))$.
\end{proposition}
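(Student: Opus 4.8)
The plan is to exploit the fact that on a curve both $\Omega_C^1\cong\omega_C$ and $T_C\cong\omega_C^\vee$ are \emph{line} bundles, so the Ulrich criterion recalled just above the statement collapses to two cohomological vanishings for a single invertible sheaf. These I can control entirely through degree, Riemann--Roch and Serre duality, so the whole proof is essentially a numerical bookkeeping.

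For the cotangent bundle I would argue directly that the second vanishing always fails, independently of $g$ and $d$. By Serre duality on $C$,
\[
h^1\big(C,\omega_C(-H)\big)=h^0\big(C,\omega_C\otimes(\omega_C(-H))^\vee\big)=h^0\big(C,\mathcal{O}_C(H)\big).
\]
Since $H$ is very ample, the complete linear system $|H|$ embeds $C$ into a projective space of dimension at least one, whence $h^0(C,\mathcal{O}_C(H))\geq 2>0$. Thus $\Omega_C^1$ can never satisfy $h^1(C,\Omega_C^1(-H))=0$, and is therefore never Ulrich.

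For the tangent bundle I would first extract a numerical constraint from the Ulrich criterion, which forces $\chi(C,T_C(-H))=h^0-h^1=0$. Riemann--Roch on a curve of genus $g$ gives
\[
\chi\big(C,T_C(-H)\big)=\deg\big(\omega_C^\vee(-H)\big)+1-g=(2-2g-d)+1-g=3-3g-d,
\]
where $d=\deg(H)\geq 1$ by very ampleness. Setting $3-3g-d=0$ with $g\geq 0$ and $d\geq 1$ leaves only the solution $g=0$, $d=3$, since any $g\geq 1$ would force $d\leq 0$. Hence the only candidate is $(C,\mathcal{O}_C(H))\cong(\mathbb{P}^1,\mathcal{O}_{\mathbb{P}^1}(3))$.

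It then remains to verify sufficiency in this single case, which is a routine computation on $\mathbb{P}^1$: here $T_C(-H)=\mathcal{O}_{\mathbb{P}^1}(2-3)=\mathcal{O}_{\mathbb{P}^1}(-1)$, whose cohomology vanishes in every degree, so both Ulrich vanishings hold and $T_C$ is indeed Ulrich; and $|\mathcal{O}_{\mathbb{P}^1}(3)|$ realises $\mathbb{P}^1$ as the twisted cubic in $\mathbb{P}^3$. I do not expect a genuine obstacle here: once everything is phrased through $\chi(T_C(-H))=0$, the integrality of $g$ and $d$ together with $d\geq 1$ does all the work. The only points requiring a little care are the elementary justification that very ampleness gives $h^0(\mathcal{O}_C(H))>0$ (needed for the cotangent statement) and that $d\geq 1$ (needed to discard $g\geq 1$ for the tangent statement).
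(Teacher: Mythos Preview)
Your proof is correct. The cotangent argument is identical to the paper's. For the tangent bundle you take a slightly cleaner path: rather than splitting into the cases $g=0$ and $g\geq 1$ and analysing the two vanishings $h^0$ and $h^1$ separately in each case (as the paper does), you observe that the Ulrich condition forces $\chi(C,T_C(-H))=0$, compute this via Riemann--Roch as $3-3g-d$, and let the single equation $d=3(1-g)$ with $d\geq 1$ pin down $(g,d)=(0,3)$ at once. This packaging is a little more efficient; the paper's version, on the other hand, makes the individual obstructions visible (for $g\geq 1$ it exhibits explicitly that $h^1(C,T_C(-H))=h^0(C,\omega_C^{\otimes 2}(H))=3g-3+d>0$). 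Both routes rest on the same ingredients and neither has a real advantage beyond taste.
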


\begin{proof}
In the case of $\Omega_C^1 \cong \omega_C$, the Ulrich condition reduces to check the two vanishing conditions $h^0(C,\omega_C(-H))=h^1(C,\omega_C(-H))=0.$ 
They are equivalent, by Serre duality, to $h^1(C,\OO_C(H))=h^0(C,\OO_C(H))=0$. The latter vanishing is impossible since $\mathcal{O}_C(H)\cong \OO_C(1)$ is very ample, and hence $\Omega_C^1$ cannot be Ulrich with respect to any $H$.

In the case of $T_C\cong \omega_C^\vee$, we are left to check the two vanishing conditions $h^0(C,\omega_C^\vee(-H))=h^1(C,\omega_C^\vee(-H))=0$. Let $g$ be the genus of $C$. If $C\cong \PP^1$ the vanishing conditions are reduced to $h^0(\PP^1,\OO_{\PP^1}(2-d))=0$ and $h^1(\PP^1,\OO_{\PP^1}(2-d))=h^0(\PP^1,\OO_{\PP^1}(d-4))=0$, by Serre duality. The first vanishing implies that $d\geq 3$, while the second one implies that $d\leq 3$. Therefore $(C,\OO_C(H))\cong (\PP^1, \OO_{\PP^1}(3))$. On the other hand, if $g \geq 1$ then the first vanishing $h^0(C,\omega_C^\vee(-H))=0$ follows directly, since $\deg(\omega_C^\vee(-H))=2-2g-d<0$. However, the second vanishing is equivalent to $h^0(C,\omega_C^{\otimes 2}(H))=0$ by Serre duality. Since $h^1(C,\omega_C^{\otimes 2}(H))=h^0(C,\omega_C^\vee(-H))=0$, the Riemann-Roch theorem yields
\[
h^0(C,\omega_C^{\otimes 2}(H))=4g-4+d-g+1=3g-3+d\geq 1
\]
and hence $T_C$ is not an Ulrich bundle.
\end{proof}

\subsection{Surfaces}\label{subsection:surfaces} \hfill

Let us consider a smooth projective surface $S\subseteq \mathbb{P}^N$ of degree $d=H^2\geq 1$, and let $E$ be a vector bundle on $S$. Again, by Theorem \ref{thm:eisenbud}, we have that $E$ is an Ulrich bundle if and only if
\[
h^1(S,E(-H))=h^2(S,E(-2H))=h^0(S,E(-H))=h^1(S,E(-2H))=0.
\]
Before treating the general case, we consider the following motivating example.

\begin{example}\label{ex:special}
Let us recall that a vector bundle $E$ on $S$ is {\bf Ulrich special} if it is an Ulrich bundle, $\operatorname{rk}(E)=2$ and $c_1(E)=K_S+3H$. Then, the tangent bundle $T_S$ is an  Ulrich special bundle if and only if $S$ is the {\em Veronese surface} in $\PP^5$, i.e., $(S,\OO_S(H))\cong (\PP^2,\OO_{\PP^2}(2))$.

Indeed, if we assume that $T_S$ is an  Ulrich special bundle then the condition $-K_S=c_1(T_S)=3H+K_S$ in $\operatorname{NS}(S)$ implies that $-2K_S = 3H$ is very ample. In particular, $S$ is a del Pezzo surface and thus $\operatorname{NS}(S)\cong \ZZ^{10-m}$ is torsion-free, where $m=K_S^2\in \{1,\ldots,9\}$ is the anti-canonical degree of $S$. Hence, the equality $-2K_S=3H$ implies that $H=2A$ for some ample divisor $A$, and in particular $-K_S=3A$. In other words, the Fano index $i_S$ of $S$ is maximal, i.e., $S\cong \PP^2$ by the Kobayashi--Ochiai theorem \cite{KO73}. Since $\omega_{\PP^2}^\vee\cong \OO_{\PP^2}(3)$ we have that $\OO_S(H)\cong \OO_{\PP^2}(2)$ and hence $(S,\OO_S(H))\cong (\PP^2,\OO_{\PP^2}(2))$ is the Veronese surface.

We are left to check that $T_{\PP^2}$ is Ulrich with respect to $\OO_{\PP^2}(2)$. This is already stated in \cite[Proposition 5.9]{ES03} (see also \cite[Theorem 5.2]{CG17} and \cite[Example 3.1]{AHMPL19}, where the authors show moreover that $T_{\PP^2}$ is actually the \emph{unique} Ulrich bundle on the Veronese surface in $\PP^5$), but we include a short proof here for the sake of completeness.

First of all, for any smooth projective surface $S$ we have that $T_S\cong \Omega_S^1\otimes \omega_S^\vee$ and hence $T_{\PP^2}\cong \Omega_{\PP^2}^1\otimes \OO_{\PP^2}(3)$. If follows therefore that $h^1(\PP^2,T_{\PP^2}(-H))=h^1(\PP^2,\Omega_{\PP^2}^1\otimes \OO_{\PP^2}(1))=0$ by Bott vanishing. Secondly, by the same reason as before and by Serre duality, we note that 
\[
h^2(\PP^2,T_{\PP^2}(-2H))=h^2(\PP^2,T_{\PP^2}\otimes \OO_{\PP^2}(-4))=h^2(\PP^2,\Omega_{\PP^2}^1\otimes \OO_{\PP^2}(-1))=h^0(\PP^2,T_{\PP^2}\otimes \OO_{\PP^2}(-2)).
\] 
The long exact sequence in cohomology induced by the twisted Euler exact sequence
 \[
0 \rightarrow \OO_{\PP^2}(-2)\rightarrow \OO_{\PP^2}(-1)^{\oplus 3} \rightarrow T_{\PP^2}\otimes \OO_{\PP^2}(-2) \rightarrow 0
\]
implies the vanishing $h^0(\PP^2,T_{\PP^2}(-H))=h^0(\PP^2,T_{\PP^2}\otimes \OO_{\PP^2}(-2))=0$. Finally, the last condition
\[
h^1(\PP^2,T_{\PP^2}(-2H))=h^1(\PP^2,T_{\PP^2}\otimes \OO_{\PP^2}(-4))=h^1(\PP^2,\Omega_{\PP^2}^1\otimes \OO_{\PP^2}(1))=0
\]
follows from Serre duality and Bott vanishing.
\end{example}

To treat the general case, we recall the following result by Reider in \cite[Theorem 1, Remark 1.2]{Rei88}.

\begin{theorem}[Reider]\label{thm : Reider}
Let $\mathcal{L}\cong \mathcal{O}_S(D)$ be a nef line bundle on a smooth projective surface $S$. If $D^2\geq 9$, then the adjoint line bundle $\omega_S\otimes \mathcal{L}$ is {\em very ample} unless there exists a non-zero effective divisor $E$ verifying one of the following conditions:
\begin{itemize}
    \item[(a)] $D\cdot E=0$ and $E^2\in \{-1,-2\}$.
    \item[(b)] $D\cdot E=1$ and $E^2\in \{0,-1\}$.
    \item[(c)] $D\cdot E=2$ and $E^2 = 0$.
    \item[(d)] $D\cdot E=3$, $D\equiv 3E$ in $\operatorname{NS}(S)$, and $E^2=1$.
\end{itemize}
\end{theorem}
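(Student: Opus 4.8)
The plan is to reduce \emph{very ampleness} of $\omega_S\otimes\mathcal{L}$ to a cohomological vanishing, and then to extract the exceptional divisor $E$ from a Bogomolov-unstable rank two bundle. Recall that $\omega_S\otimes\mathcal{L}$ is very ample if and only if, for every zero-dimensional subscheme $Z\subseteq S$ of length two (these encode both a pair of distinct points and a single point together with a tangent direction, so that such $Z$ govern the separation of points and of tangent vectors), the restriction map $\operatorname{H}^0(S,\omega_S\otimes\mathcal{L})\to\operatorname{H}^0(Z,(\omega_S\otimes\mathcal{L})|_Z)$ is surjective. From the ideal sheaf sequence of $Z$ twisted by $\omega_S\otimes\mathcal{L}$, this surjectivity follows once $\operatorname{H}^1(S,\mathcal{I}_Z\otimes\omega_S\otimes\mathcal{L})=0$. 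So I would argue by contradiction, assuming this group is non-zero for some length two subscheme $Z$ (the length one case, which governs base-point freeness, is handled identically with $c_2=1$ and $D^2\geq 5$, and produces a sub-collection of the listed conditions).

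By Serre duality on the surface $S$, the non-vanishing of $\operatorname{H}^1(S,\mathcal{I}_Z\otimes\omega_S\otimes\mathcal{L})$ is equivalent to $\operatorname{Ext}^1(\mathcal{I}_Z\otimes\mathcal{L},\mathcal{O}_S)\neq 0$. A non-zero extension class produces a rank two sheaf $\mathcal{E}$ fitting into
\[
0\longrightarrow\mathcal{O}_S\longrightarrow\mathcal{E}\longrightarrow\mathcal{I}_Z\otimes\mathcal{L}\longrightarrow 0,
\]
and, after replacing $Z$ by a minimal subscheme for which the vanishing fails, the Cayley--Bacharach property holds, so the Serre (Hartshorne--Serre) construction guarantees that $\mathcal{E}$ is an honest rank two \emph{vector bundle}. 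Reading off Chern classes from the sequence gives $c_1(\mathcal{E})=D$ and $c_2(\mathcal{E})=\operatorname{length}(Z)=2$. The hypothesis $D^2\geq 9$ now forces the discriminant to be positive, namely $c_1(\mathcal{E})^2-4c_2(\mathcal{E})=D^2-8\geq 1>0$, so Bogomolov's instability theorem applies: there is a sub-line bundle $\mathcal{O}_S(A)\hookrightarrow\mathcal{E}$ and a zero-dimensional subscheme $W$ with
\[
0\longrightarrow\mathcal{O}_S(A)\longrightarrow\mathcal{E}\longrightarrow\mathcal{I}_W\otimes\mathcal{O}_S(D-A)\longrightarrow 0,
\]
where the class $2A-D$ lies in the positive cone, i.e. $(2A-D)^2>0$ and $(2A-D)\cdot H'>0$ for any ample $H'$.

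Setting $E:=D-A$, I would compose $\mathcal{O}_S(A)\hookrightarrow\mathcal{E}\to\mathcal{I}_Z\otimes\mathcal{L}$. If this composite vanished, then $\mathcal{O}_S(A)$ would factor through $\mathcal{O}_S$, forcing $A\cdot H'\leq 0$, which contradicts $(2A-D)\cdot H'>0$ together with $D\cdot H'\geq 0$ (nefness of $D$); hence the composite is non-zero and yields a section of $\mathcal{O}_S(E)$ vanishing along $Z$, so that $E$ is a non-zero effective divisor containing $Z$. To pin down the numerics, write $x=D\cdot E$ and $y=E^2$ and collect the constraints: $x\geq 0$ since $D$ is nef and $E$ is effective; $x-y\leq 2$, because $c_2(\mathcal{E})=A\cdot(D-A)+\operatorname{length}(W)=(x-y)+\operatorname{length}(W)$ with $\operatorname{length}(W)\geq 0$; $2x<D^2$, since $(D-2E)\cdot D=(2A-D)\cdot D>0$ as $2A-D$ is in the positive cone and $D$ is nef with $D^2>0$; and the Hodge index inequality $D^2\cdot y\leq x^2$. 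A short case analysis over $x\in\{0,1,2,3\}$ then isolates exactly the families (a)--(d): for $x\geq 3$ one combines $x-y\leq 2$, $2x<D^2$ and $D^2 y\leq x^2$ to get $x(x-4)<0$, so $x=3$, and then equality in the Hodge index bound forces $E\equiv\lambda D$, giving $D\equiv 3E$, $E^2=1$ and $D^2=9$ (case (d)); for $x\in\{0,1,2\}$ the Hodge index bound together with $D^2\geq 9$ forces $y\leq 0$, while $x-y\leq 2$ bounds $y$ from below, and the observation that a non-zero effective divisor cannot be numerically trivial rules out $(x,y)=(0,0)$, leaving precisely (a), (b) and (c).

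The main obstacle is twofold. The first delicate point is the local freeness of the extension sheaf $\mathcal{E}$: this is exactly the Cayley--Bacharach/Serre-construction step, and it is where the distinction between separating points and separating tangent directions genuinely enters, since the minimality choice for $Z$ must be made compatibly in both geometric situations. The second is the numerical casework: one must feed the positive-cone inequalities, the $c_2$ identity and the Hodge index theorem together so that exactly the four boundary families survive, with the sharp equality analysis producing the proportionality $D\equiv 3E$ in case (d). Bogomolov's instability theorem and the Hodge index theorem are used here as the two essential external inputs.
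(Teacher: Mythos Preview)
The paper does not actually prove this statement: Theorem~\ref{thm : Reider} is quoted from Reider's original article \cite{Rei88} (specifically \cite[Theorem 1, Remark 1.2]{Rei88}) and used as a black box in the proof of Theorem~\ref{thm:surfaces}. There is therefore no ``paper's own proof'' to compare against.

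That said, what you have written is a faithful and correct outline of Reider's original argument. The chain---reduce very ampleness to $\operatorname{H}^1(S,\mathcal{I}_Z\otimes\omega_S\otimes\mathcal{L})=0$ for length-two $Z$, dualize to an extension class, invoke Cayley--Bacharach/Serre to force local freeness of the resulting rank-two sheaf $\mathcal{E}$, apply Bogomolov instability (since $c_1^2-4c_2=D^2-8>0$), and then run the Hodge-index numerics on the destabilizing quotient---is exactly the standard route. Your case analysis on $(x,y)=(D\cdot E,E^2)$ is accurate, including the equality case of Hodge index producing $D\equiv 3E$ in (d) and the exclusion of $(0,0)$ via numerical triviality. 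The two ``obstacles'' you flag (local freeness via minimality of $Z$, and the sharp casework) are indeed the genuine content of the proof. One cosmetic remark: you reuse the symbol $E$ for the divisor $D-A$ after having used $\mathcal{E}$ for the bundle; the calligraphic distinction saves you, but in a write-up it would be cleaner to rename one of them.
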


We will also need the following observation.

\begin{lemma}\label{lemma:degree}
Let $(S,\OO_S(H))$ as above. If $T_S$ is an Ulrich bundle with respect to $H$ then $\kappa(S)=-\infty$, $K_S\cdot H = -6$ and $\deg(S)=4$.
\end{lemma}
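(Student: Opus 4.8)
The plan is to feed the rank-two bundle $E = T_S$ into Casnati's numerical identities for Ulrich bundles, combine them with Noether's formula, and then pin down the degree by the Hodge index inequality; the rationality needed to make the formulas explicit will come from the movable-slope criterion for rational connectedness.

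First I would substitute $E = T_S$ (so $\operatorname{rk}(T_S) = 2$ and $c_1(T_S) = -K_S$) into the first identity of \eqref{eq:dag} in Proposition \ref{prop:casnati}. This reads $-K_S \cdot H = (K_S + 3H)\cdot H$, which rearranges to $K_S \cdot H = -\tfrac{3}{2}d$, where $d = H^2$; in particular $d$ is even. To obtain $\kappa(S) = -\infty$ I would exploit that an Ulrich bundle is $H$-semistable: taking the movable class $\alpha = [H]$ (which lies in $\operatorname{Mov}_1(S)$ since $H$ is ample), semistability gives $\mu_\alpha^{\min}(T_S) = \mu_\alpha(T_S) = \tfrac{-K_S\cdot H}{2} = \tfrac{3d}{4} > 0$. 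By Theorem \ref{thm:BDPP+CP}(2) the surface $S$ is then rationally connected, hence rational; in particular $\kappa(S) = -\infty$ and $\chi(S,\OO_S) = 1$.

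Next I would extract $K_S^2$ from the second identity of \eqref{eq:dag}. With the same substitutions it gives $c_2(S) = K_S^2 - 2d + 2\chi(S,\OO_S)$, and comparing with Noether's formula $12\chi(S,\OO_S) = K_S^2 + c_2(S)$ yields $5\chi(S,\OO_S) = K_S^2 - d$, so $K_S^2 = d + 5$ using $\chi(S,\OO_S) = 1$. Now the Hodge index inequality $(K_S\cdot H)^2 \geq K_S^2 \cdot H^2$ becomes $\tfrac{9}{4}d^2 \geq (d+5)d$, which simplifies to $d \geq 4$.

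For the reverse inequality I would use that a smooth rational surface satisfies $K_S^2 \leq 9$ (it is $\mathbb{P}^2$, or a blow-up of a minimal rational surface $\mathbb{P}^2$ or $\mathbb{F}_n$, and blowing up drops $K^2$ by one). Hence $d = K_S^2 - 5 \leq 4$, forcing $d = 4$ and therefore $K_S\cdot H = -6$. The main obstacle is precisely this upper bound: the lower bound $d \geq 4$ drops out cleanly from Hodge index, but closing the gap requires knowing that $S$ is rational, so that $\chi(S,\OO_S) = 1$ and $K_S^2 \leq 9$ — which is exactly why the rational-connectedness step via Theorem \ref{thm:BDPP+CP} is doing the real work. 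Alternatively one could control $K_S^2$ through Reider's theorem (Theorem \ref{thm : Reider}) or the classification of surfaces with nef tangent bundle, but the rationality route is the most economical.
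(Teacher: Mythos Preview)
Your argument is correct, but it takes a substantially different route from the paper's proof. The paper obtains $\kappa(S)=-\infty$ by the elementary observation that $K_S\cdot H=-\tfrac{3}{2}d<0$ forces $K_S$ to be non-pseudo-effective (no appeal to semistability or Theorem~\ref{thm:BDPP+CP} is needed), and then pins down $d=4$ in one line via the genus formula: a general $C\in|H|$ is smooth irreducible by Bertini, and $g(C)=1+\tfrac12(H^2+K_S\cdot H)=1-\tfrac{d}{4}$ must be a non-negative integer, so $d=4$.

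Your approach instead proves \emph{rationality} up front (via $\mu_\alpha^{\min}(T_S)>0$ and Theorem~\ref{thm:BDPP+CP}), then combines the second Casnati identity with Noether's formula to get $K_S^2=d+5$, and closes the pincer with Hodge index ($d\geq 4$) and the bound $K_S^2\leq 9$ for rational surfaces ($d\leq 4$). This is heavier machinery for the lemma as stated, but it front-loads information that the paper only extracts later in the proof of Theorem~\ref{thm:surfaces} (where the same relation $5\chi(S,\OO_S)=K_S^2-d$ appears and rationality is deduced via Reider plus the classification of ruled surfaces). Indeed, Remark~\ref{rema:surfaces} explicitly acknowledges that one could argue rationality directly ``as in the higher dimensional cases'' --- which is precisely what you do --- but the authors opted for a more classical surface-theoretic treatment. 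Your route would let you shorten Theorem~\ref{thm:surfaces} considerably; the paper's route keeps Lemma~\ref{lemma:degree} self-contained and elementary.
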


\begin{proof}
The identities (\ref{eq:dag}) in Proposition \ref{prop:casnati} imply that $c_1(T_S)\cdot H = -K_S\cdot H = 3H^2 + H\cdot K_S$ and hence $2K_S\cdot H = -3H^2 < 0$. In particular, since $H$ is very ample it follows that $K_S$ is not pseudo-effective and therefore $\kappa(S)=-\infty$ (see e.g. \cite[Lemma 14.6]{Bad01}). Finally, if follows from Bertini theorem that a general curve $C\in |H|$ is smooth irreducible and hence its genus is
\[
g(C)=1+\frac{1}{2}(H^2+K_S\cdot H)=1-\frac{1}{4}H^2 \in \ZZ_{\geq 0},
\]
from which we deduce that $\deg(S)=H^2=4$, and thus $K_S\cdot H = - 6$.
\end{proof}

We are now ready to state the main result of this subsection.

\begin{theorem}\label{thm:surfaces}
Let $(S,\mathcal{O}_S(H))$ as above. Then, the cotangent bundle $\Omega_S^1$ is never an Ulrich bundle. Moreover, the tangent bundle $T_S$ is an Ulrich bundle with respect to $H$ if and only if $S$ is the {\em Veronese surface} in $\mathbb{P}^5$, i.e., $(S,\mathcal{O}_S(H))\cong (\mathbb{P}^2, \mathcal{O}_{\mathbb{P}^2}(2))$.
\end{theorem}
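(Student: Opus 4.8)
The plan is to handle the two bundles by entirely different mechanisms: the cotangent bundle falls to a one-line Chern-class obstruction, while the tangent bundle requires pinning down $K_S^2$ and then appealing to the classification of surfaces of Kodaira dimension $-\infty$. For $\Omega_S^1$ I would argue by contradiction. This is a rank-$2$ bundle with $c_1(\Omega_S^1)=K_S$, so if it were Ulrich then Lemma \ref{lemma:c1} (equivalently the first identity in (\ref{eq:dag})), taken with $r=2$, would give
\[
K_S\cdot H = c_1(\Omega_S^1)\cdot H = (K_S+3H)\cdot H = K_S\cdot H+3H^2,
\]
forcing $H^2=0$; this is absurd since $H$ is very ample and hence $H^2=\deg(S)\geq 1$. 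Thus $\Omega_S^1$ is never Ulrich, in any embedding.

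For $T_S$, assume it is Ulrich with respect to $H$. Lemma \ref{lemma:degree} already records $\kappa(S)=-\infty$, $K_S\cdot H=-6$ and $H^2=4$, so the missing numerical invariant is $K_S^2$. I would obtain it by feeding $E=T_S$ (with $r=2$, $c_1(T_S)=-K_S$, $c_2(T_S)=c_2(S)$) into Casnati's second identity in (\ref{eq:dag}), which then reads $c_2(S)=K_S^2-8+2\chi(S,\mathcal{O}_S)$, and by eliminating $c_2(S)$ using Noether's formula $\chi(S,\mathcal{O}_S)=\frac{1}{12}(K_S^2+c_2(S))$. This yields $K_S^2=5\chi(S,\mathcal{O}_S)+4$. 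Since $\kappa(S)=-\infty$ forces $p_g(S)=0$, we have $\chi(S,\mathcal{O}_S)=1-q(S)$, and therefore $K_S^2=9-5q(S)$.

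Now I would invoke the Enriques--Kodaira classification. A surface with $\kappa=-\infty$ is birationally ruled over a curve $B$ with $q(S)=g(B)$, and its minimal model is either $\mathbb{P}^2$ (when $q=0$, with $K^2=9$) or a $\mathbb{P}^1$-bundle over $B$ (with $K^2=8(1-q)$). Since blow-ups only decrease $K^2$, in the irregular case $q\geq 1$ we would need $9-5q=K_S^2\leq 8(1-q)$, i.e. $3q\leq -1$, which is impossible; hence $q=0$ and $K_S^2=9$. Among rational surfaces only $\mathbb{P}^2$ attains $K^2=9$ (Hirzebruch surfaces give $8$, and any further blow-up strictly less), so $S\cong\mathbb{P}^2$. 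Writing $H=\mathcal{O}_{\mathbb{P}^2}(m)$ and using $-6=K_S\cdot H=-3m$ gives $m=2$, so $(S,\mathcal{O}_S(H))\cong(\mathbb{P}^2,\mathcal{O}_{\mathbb{P}^2}(2))$ is the Veronese surface. The converse direction is precisely Example \ref{ex:special}, which checks that $T_{\mathbb{P}^2}$ is Ulrich with respect to $\mathcal{O}_{\mathbb{P}^2}(2)$.

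The step I expect to be the real obstacle is establishing $q(S)=0$ (equivalently $K_S^2=9$): the Chern-class identities only constrain intersection numbers, so ruling out the irregular ruled surfaces genuinely requires the inequality $K_S^2\leq 8(1-q)$ coming from minimal model theory (or, alternatively, the classification of surfaces with nef tangent bundle, which applies since an Ulrich bundle is nef). Once $q=0$ is secured one may finish either as above --- rational with $K^2=9$ forces $\mathbb{P}^2$ --- or, more structurally, by observing that $D:=-2K_S-3H$ then satisfies $D\cdot H=0$ and $D^2=4K_S^2-36=0$, so the Hodge index theorem gives $D\equiv 0$; thus $c_1(T_S)=-K_S\equiv K_S+3H$ exhibits $T_S$ as an Ulrich \emph{special} bundle and Example \ref{ex:special} identifies $(S,\mathcal{O}_S(H))$ with the Veronese surface directly.
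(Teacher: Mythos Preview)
Your argument is correct, and for the cotangent bundle it is identical to the paper's. For the tangent bundle, however, you take a genuinely different route at the key step of pinning down $K_S^2$. The paper obtains the \emph{lower} bound $K_S^2\ge 8$ by first invoking Reider's theorem to show $K_S+3H$ is very ample, then computing the genus $g(C)=K_S^2-8$ of a general $C\in|K_S+3H|$; the \emph{upper} bound $K_S^2\le 9$ comes from $g(\Gamma)=\tfrac15(9-K_S^2)\ge 0$, and divisibility by $5$ then forces $K_S^2=9$. You bypass Reider entirely: from $K_S^2=9-5q$ and the minimal-model inequality $K_S^2\le 8(1-q)$ (valid for any birationally ruled surface with $q\ge 1$), you rule out $q\ge 1$ in one line, whence $q=0$ and $K_S^2=9$ immediately. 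Your approach is more elementary and avoids the case analysis in Reider's theorem; the paper's approach, by contrast, is self-contained within classical adjoint-linear-system techniques and does not need the formula $K^2=8(1-g)$ for geometrically ruled surfaces. Your alternative endgame via the Hodge index theorem (showing $-2K_S\equiv 3H$ numerically, hence $T_S$ is Ulrich special, and then appealing to Example~\ref{ex:special}) is also valid and gives a pleasant structural explanation of why the Veronese surface appears.
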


\begin{proof}
First, if we assume that the cotangent bundle $\Omega_S^1$ of $S$ is an Ulrich bundle, then the identities (\ref{eq:dag}) in Proposition \ref{prop:casnati} imply that $c_1(\Omega_S^1)\cdot H = K_S\cdot H = 3H^2+K_S\cdot H$ and hence $H^2=0$, which is impossible since $H$ is very ample. Therefore, the cotangent bundle $\Omega_S^1$ is never an Ulrich bundle.

Second, if we assume that $T_S$ is an Ulrich bundle on $S$, the identities (\ref{eq:dag}) in Proposition \ref{prop:casnati} together with Lemma \ref{lemma:degree} imply that
$$\chi_{\operatorname{top}}(S)=K_S^2-8+2\chi(S,\mathcal{O}_S).$$
Combining this with Noether's formula $12\chi(S,\mathcal{O}_S)=K_S^2+\chi_{\operatorname{top}}(S)$, we get that $\chi(S,\mathcal{O}_S)=\frac{1}{5}(K_S^2-4)$.

Let us notice that the divisor $K_S+3H$ is very ample. Indeed, the divisor $D:=3H$ is very ample with $D^2=36$ and hence the fact that $K_S+D$ is very ample as well follows from Theorem \ref{thm : Reider}: since $D\cdot E\geq 3$ for every non-zero effective divisor, we only need to consider the case (d) in Reider's theorem, but in that case we would have $D^2 = 9$.

Since $K_S+3H$ is very ample, Bertini theorem implies that a general curve $C\in |K_S+3H|$ is smooth irreducible and hence 
$$g(C)=1+\frac{1}{2}(K_S^2+6K_S\cdot H+9H^2+K_S^2+3K_S\cdot H)=K_S^2-8,$$
since $H^2=4$ and $K_S\cdot H=-6$, by Lemma \ref{lemma:degree}. It follows that $K_S^2\geq 8$.

Since $\kappa(S)=-\infty$ by Lemma \ref{lemma:degree}, we know that $S$ is a ruled surface and hence birationally isomorphic to
$\Gamma\times \mathbb{P}^1$, for some smooth projective curve $\Gamma$. In particular, $p_g(S)=0$ and  $q(S)=g(\Gamma)$, from which we deduce that 
$$\chi(S,\mathcal{O}_S)=1-g(\Gamma)=\frac{1}{5}(K_S^2-4)\Leftrightarrow g(\Gamma)=\frac{1}{5}(9-K_S^2)\geq 0.$$
We conclude therefore that $K_S^2\leq 9$. By divisibility reasons, we have that $K_S^2=9$ and in particular $S$ is rational by Castelnuovo's criterion, since $q(S)=g(\Gamma)=0$ in that case.
\ 
Finally, it follows from the classification of minimal rational surfaces that the unique rational surface $S$ with $K_S^2=9$ is
$S\cong \mathbb{P}^2$. The fact that $\mathcal{O}_S(H)\cong \mathcal{
O}_{\mathbb{P}^2}(2)$ follows from $\operatorname{deg}(S)=H^2=4$.
\end{proof}

\begin{remark}\label{rema:surfaces}
It is worth mentioning that one could use a similar method as in the higher dimensional cases to prove directly that $S$ is rational, and in particular $q(S)=g(\Gamma)=0$ (cf. Theorem \ref{thm:theefolds}). However, we preferred to give an alternative proof only based on classical results for algebraic surfaces.
\end{remark}

\subsection{Threefolds and higher dimensions}\label{subsection:threefolds} \hfill

Let us first remark that we cannot expect a similar answer as in the lower dimensional cases. More precisely, we have the following observation (cf. \cite[\S 5]{ES03}).

\begin{lemma}\label{lemma:Pn}
Let $n\geq 3$ be a positive integer. Then the tangent bundle of $\PP^n$ is never Ulrich.
\end{lemma}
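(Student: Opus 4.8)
The plan is to derive a numerical contradiction directly from the first Chern class restriction in Lemma \ref{lemma:c1}. Since $\operatorname{Pic}(\mathbb{P}^n)\cong \mathbb{Z}$ is generated by the hyperplane class $L$, any very ample divisor $H$ on $\mathbb{P}^n$ is of the form $H=mL$ for some integer $m\geq 1$, and I would record the standard identities $L^n=1$, $c_1(T_{\mathbb{P}^n})=(n+1)L$, $K_{\mathbb{P}^n}=-(n+1)L$, and $\operatorname{rk}(T_{\mathbb{P}^n})=n$.

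Suppose $T_{\mathbb{P}^n}$ were an Ulrich bundle with respect to $H=mL$. Applying Lemma \ref{lemma:c1} with $E=T_{\mathbb{P}^n}$ and $r=n$, I would compute both sides of
\[
c_1(T_{\mathbb{P}^n})\cdot H^{n-1}=\frac{n}{2}\left(K_{\mathbb{P}^n}+(n+1)H\right)\cdot H^{n-1}.
\]
The left-hand side equals $(n+1)m^{n-1}$, while the right-hand side equals $\frac{n}{2}(n+1)(m-1)m^{n-1}$, since $K_{\mathbb{P}^n}+(n+1)H=(n+1)(m-1)L$. Dividing through by the positive quantity $(n+1)m^{n-1}$ reduces the Ulrich condition to the single Diophantine equation $n(m-1)=2$.

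Finally I would observe that for $n\geq 3$ this equation has no solution with $m\geq 1$: if $m=1$ the left-hand side vanishes, while if $m\geq 2$ it is at least $n\geq 3>2$. Hence $T_{\mathbb{P}^n}$ cannot be Ulrich for any very ample $H$ when $n\geq 3$.

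I do not anticipate a serious obstacle here, as the argument is a one-line numerical consequence of Lemma \ref{lemma:c1}. The only point requiring care is the bookkeeping with $H=mL$ and the reduction to $n(m-1)=2$; it is worth remarking that the two positive-integer solutions $(n,m)=(1,3)$ and $(n,m)=(2,2)$ that this equation does admit correspond precisely to the twisted cubic and the Veronese surface, so the computation is fully consistent with the \hyperlink{maintheorem}{Main Theorem}.
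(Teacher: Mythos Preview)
Your proof is correct. The paper's own argument is slightly different: instead of Lemma \ref{lemma:c1}, it uses the Ulrich identity $h^0(X,E)=r\deg(X)$ (see \S\ref{subsection:ulrich}) together with the Euler sequence computation $h^0(\PP^n,T_{\PP^n})=n(n+2)$. Writing $H=\mathcal{O}_{\PP^n}(d)$, this forces $n(n+2)=nd^{\,n}$, i.e.\ $d^{\,n}=n+2$, which is impossible for $n\geq 3$ since $d\geq 2$ would give $d^{\,n}\geq 2^n>n+2$.

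Both routes are short numerical contradictions and ultimately encode the same information (indeed $d^{\,n}=n+2$ also has exactly the solutions $(n,d)=(1,3)$ and $(2,2)$). Your approach via the first Chern class is arguably more in keeping with the surrounding material, since Lemma \ref{lemma:c1} is the workhorse of the section, and your equation $n(m-1)=2$ makes the two low-dimensional exceptions visible linearly rather than through an exponential equation. The paper's approach, on the other hand, is self-contained in the sense that it does not invoke the Hirzebruch--Riemann--Roch machinery behind Lemma \ref{lemma:c1}, relying only on the elementary facts $h^0(X,E)=r\deg(X)$ and the Euler sequence.
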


\begin{proof}
It follows directly from the Euler exact sequence for $T_{\PP^n}$ that $h^0(\PP^n,T_{\PP^n})=n(n+2)$. If $T_{\PP^n}$ is Ulrich with respect to $\OO_{\PP^n}(d)$ then we would have $h^0(\PP^n,T_{\PP^n})=n\deg(\PP^n)=nd^n$ and hence $d^n = n+2$. In particular, $d\geq 2$ in that case. This is impossible, since $d^n \geq 2^n > n+2$ for $n\geq 3$.
\end{proof}

The numerical characterization of rationally connected varieties discussed in \S \ref{subsection:RC} and the restrictions on the first Chern class of Ulrich bundles treated in \S \ref{section:chern} give us the following result in higher dimensions.

\begin{theorem}\label{thm:theefolds}
Let $(X,\mathcal{O}_X(H))$ be a smooth projective variety of dimension $n\geq 3$. Then,
\begin{itemize}
    \item[(a)] The cotangent bundle $\Omega_X^1$ is never an Ulrich bundle with respect to $H$.
    \item[(b)] Assume that the tangent bundle $T_X$ is an Ulrich bundle with respect to $H$, then $X\cong G/P$ is a \emph{rational} homogeneous space, where $G$ is a semi-simple complex Lie group and $P\subseteq G$ is a parabolic subgroup. In this case, $\deg(X)$ is a positive multiple of $(n+2)/\gcd(n^2+n,n+2)$.
\end{itemize}
\end{theorem}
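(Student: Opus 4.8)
The plan is to read everything off the first Chern class formula of Lemma \ref{lemma:c1}, combined with the two positivity inputs available for any Ulrich bundle: it is globally generated (hence nef) and it is $H$-semistable. I would begin with part (a). Applying Lemma \ref{lemma:c1} to $E=\Omega_X^1$, which has rank $n$ and $c_1(\Omega_X^1)=K_X$, gives $(2-n)\,K_X\cdot H^{n-1}=n(n+1)H^n$, so that $K_X\cdot H^{n-1}=\tfrac{n(n+1)}{2-n}H^n<0$ for every $n\geq 3$. On the other hand, an Ulrich bundle is globally generated, so $\Omega_X^1$ being Ulrich would force its determinant $\det\Omega_X^1=\mathcal{O}_X(K_X)$ to be globally generated, hence $K_X$ nef and $K_X\cdot H^{n-1}\geq 0$, a contradiction. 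This settles (a).

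For (b) I would run the same computation for $E=T_X$ (rank $n$, $c_1(T_X)=-K_X$), obtaining $K_X\cdot H^{n-1}=-\tfrac{n(n+1)}{n+2}\,d$ with $d=H^n$. The degree statement is then pure arithmetic: since $K_X\cdot H^{n-1}\in\mathbb{Z}$ we must have $(n+2)\mid n(n+1)d$, and writing $g=\gcd(n^2+n,n+2)$ and cancelling the common factor shows that $\tfrac{n+2}{g}$ divides $d$; as $d>0$ this is exactly the claimed divisibility. For the structural statement I would first record that $\mu_H(T_X)=\tfrac{-K_X\cdot H^{n-1}}{n}=\tfrac{(n+1)d}{n+2}>0$. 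Taking the movable class $\alpha:=H^{n-1}\in\operatorname{Mov}_1(X)$ (a covering complete-intersection curve class), the slope $\mu_\alpha$ coincides with $\mu_H$, and since $T_X$ is $H$-semistable its Harder--Narasimhan filtration with respect to $\alpha$ is trivial, so $\mu_\alpha^{\min}(T_X)=\mu_H(T_X)>0$. Theorem \ref{thm:BDPP+CP}(2) then yields that $X$ is rationally connected.

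It remains to upgrade this to homogeneity. Here the decisive point is again that $T_X$, being Ulrich, is globally generated, so the evaluation map $\operatorname{H}^0(X,T_X)\otimes\mathcal{O}_X\to T_X$ is surjective; since $\operatorname{H}^0(X,T_X)\cong\operatorname{Lie}(\operatorname{Aut}^\circ(X))$, the global vector fields span the tangent space at every point, whence every $\operatorname{Aut}^\circ(X)$-orbit is open and, $X$ being irreducible, the action is transitive. Thus $X$ is a projective homogeneous manifold, and by the structure theory of such manifolds (Borel--Remmert) it decomposes as $A\times G/P$ with $A$ an abelian variety and $G/P$ rational homogeneous; rational connectivity forces the Albanese, hence $A$, to be trivial, leaving $X\cong G/P$ with $G$ semisimple and $P$ parabolic, as claimed. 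I expect the main obstacle to be precisely this last passage: translating the soft positivity input (rational connectivity together with global generation) into the rigid conclusion $X\cong G/P$ requires correctly invoking the classification of projective homogeneous spaces and verifying that rational connectivity annihilates the abelian factor. The Chern-class identity, the semistability estimate, and the degree divisibility, by contrast, are routine once Lemma \ref{lemma:c1} is in hand.
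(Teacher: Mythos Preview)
Your proof is correct, and part (b) follows essentially the same route as the paper: compute $c_1$ via Lemma~\ref{lemma:c1}, use $H$-semistability of the Ulrich bundle to get $\mu_\alpha^{\min}(T_X)>0$ for $\alpha=H^{n-1}$, invoke Theorem~\ref{thm:BDPP+CP} to obtain rational connectivity, use global generation of $T_X$ for homogeneity, and then Borel--Remmert plus rational connectivity to kill the abelian factor. Your direct orbit argument (surjective evaluation map $\Rightarrow$ all orbits open $\Rightarrow$ transitive) is exactly the content of the reference \cite[Proposition~2.1]{MOSCWW} that the paper cites, so there is no real difference there, and the ``obstacle'' you flag is not one.

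Where you genuinely diverge is in part (a). The paper argues as follows: from $K_X\cdot H^{n-1}<0$ and $H$-semistability of $\Omega_X^1$ it gets $\mu_\alpha^{\min}(T_X)=-\mu_\alpha^{\max}(\Omega_X^1)>0$, hence $X$ is rationally connected by Theorem~\ref{thm:BDPP+CP}, hence $\operatorname{H}^0(X,\Omega_X^1)=0$, contradicting the Ulrich condition $h^0(X,\Omega_X^1)=n\deg(X)>0$. Your argument is more elementary: an Ulrich bundle is globally generated, so $\det(\Omega_X^1)=\mathcal{O}_X(K_X)$ is globally generated, hence $K_X$ is nef and $K_X\cdot H^{n-1}\geq 0$, contradicting the Chern-class computation directly. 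This bypasses the Campana--P\u{a}un machinery entirely. The paper's route has the virtue of making the two halves (a) and (b) parallel, both resting on the same rational-connectivity criterion; your route for (a) is shorter and uses only the soft consequence that Ulrich $\Rightarrow$ globally generated $\Rightarrow$ nef determinant.
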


\begin{proof}
We know by Lemma \ref{lemma:c1} that if $E$ is a rank $r$ Ulrich bundle with respect to $H$ then
\[
c_1(E)\cdot H^{n-1} = \frac{r}{2}(K_X+(n+1)H)\cdot H^{n-1}.
\]
In particular, if we assume in (a) that $\Omega_X^1$ is an Ulrich bundle, then we get that 
\[
c_1(\Omega_X^1)\cdot H^{n-1} = \frac{n}{2}(K_X+(n+1)H)\cdot H^{n-1} \Leftrightarrow \frac{n-2}{n(n+1)}K_X\cdot H^{n-1} = -H^n.\]
Since $n\geq 3$, we deduce that $K_X\cdot H^{n-1}<0$. On the other hand, the cotangent bundle $\Omega_X^1$ is $\alpha$--semistable with respect to $\alpha:=H^{n-1}$ (see \S \ref{subsection:RC}) and thus
\[
\mu_\alpha^{\max}(\Omega_X^1)=\mu_\alpha(\Omega_X^1)=\frac{K_X\cdot H^{n-1}}{n}<0,
\]
or equivalently $\mu_\alpha^{\min}(T_X)>0$, and therefore $X$ must be rationally connected by Theorem \ref{thm:BDPP+CP}. This is impossible, since in that case we would have that $\operatorname{H}^0(X,\Omega_X^1)=0$ (see e.g. \cite[Corollary 4.18]{Deb01}), which contradicts the fact that $\Omega_X^1$ is an Ulrich bundle. 

Assume now that $T_X$ is an Ulrich bundle. First of all, the fact that $X$ is a homogeneous manifold (i.e., that admits a transitive action of an algebraic group) follows from the fact that $T_X$ is globally generated (see \S \ref{subsection:ulrich}) and \cite[Proposition 2.1]{MOSCWW}. Moreover, we know by a classical result by Borel and Remmert (see \cite{BR62}) that in this case $X\cong A \times G/P$, where $A$ is an Abelian variety, $G$ is a semi-simple complex Lie group and $P\subseteq G$ is a parabolic subgroup.

In order to rule out the factor $A$, we proceed as in (a). More precisely, Lemma \ref{lemma:c1} implies that 
\[
\frac{n+2}{n(n+1)}(-K_X\cdot H^{n-1}) = H^n,
\]
and hence $\mu_{\alpha}^{\min}(T_X)=\mu_{\alpha}(T_X)>0$. It follows from Theorem \ref{thm:BDPP+CP} that $X$ is rationally connected and thus $X\cong G/P$. Moreover, the previous computation shows that $\deg(X)=H^n$ is a positive multiple of $(n+2)/\gcd(n^2+n,n+2)$, by divisibility reasons. This shows (b).
\end{proof}

\begin{remark}\label{remark:fano 3-folds} The case of threefolds with Ulrich tangent bundle can be treated using classification results.

Indeed, if $\dim(X)=3$ and $T_X$ is an Ulrich bundle with respect to $H$, then we would have by Theorem \ref{thm:theefolds} (b) that $\deg(X)\geq 5$. In particular, we would have that $\dim \operatorname{Aut}^\circ(X)=h^0(X,T_X)=\dim(X)\deg(X)\geq 15$. 

On the other hand, it is known that rational homogeneous varieties are \emph{Fano}, i.e., the anti-canonical bundle $-K_X$ is ample (see e.g. \cite[Proposition 2.3]{MOSCWW}). Using the classification of smooth Fano threefolds by Iskovskikh and Mori--Mukai (see e.g. \cite[Chapter 12]{IP99}), together with the recent results on infinite automorphism groups on Fano threefolds, we can perform a case-by-case analysis that give us the desired result. More precisely, $\dim\operatorname{Aut}^\circ(X)\leq 15$, with equality if and only if $X\cong \mathbb{P}^3$ or $X\cong \mathbb{P}(\mathcal{O}_{\mathbb{P}^2}\oplus \mathcal{O}_{\mathbb{P}^2}(2))$ by \cite[Theorem 1.1.2]{KPS18} and \cite[Theorem 1.2]{PCS19} (see also \cite[Appendix A]{BFT21}). The first case is ruled out by Lemma \ref{lemma:Pn}, while the second variety is not a rational homogeneous threefold by \cite[Theorem 6.1]{CP91}.
\end{remark}

One immediately observe that the fact of the tangent bundle being Ulrich imposes that the manifold has a rather big automorfism group. More precisely, we have the following consequence of the above result and the discussion in \S \ref{subsection:G/P}.

\begin{corollary}\label{coro:Picard one}
Let $(X,\mathcal{O}_X(H))$ be a smooth projective variety of dimension $n \geq 4$, and Picard number one. Then, the tangent bundle $T_X$ is never Ulrich.
\end{corollary}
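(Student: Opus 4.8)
The plan is to combine the structure theorem for projective manifolds with Ulrich tangent bundle (Theorem \ref{thm:theefolds}(b)) with the sharp lower bound on $h^0(X,T_X)$ for the relevant homogeneous spaces (Lemma \ref{lemma:Aut(G/P)}). Assume, for contradiction, that $T_X$ is Ulrich with respect to $H$ on a smooth projective variety $X$ of dimension $n\geq 4$ and Picard number one. By Theorem \ref{thm:theefolds}(b), $X\cong G/P$ is a rational homogeneous space; since $\rho(X)=1$, the group $G$ is \emph{simple} and $P$ is a maximal parabolic, so $X$ falls under the classification summarized in Table \ref{table:G/P} and the hypotheses of Lemma \ref{lemma:Aut(G/P)} apply.

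The key numerical input comes from the Ulrich condition itself. Since $T_X$ is an Ulrich bundle of rank $n=\operatorname{rk}(T_X)$, the properties recalled in \S\ref{subsection:ulrich} give
\[
h^0(X,T_X)=\operatorname{rk}(T_X)\cdot\deg(X)=n\,d,
\]
where $d=\deg(X)=H^n$. First I would invoke the divisibility statement of Theorem \ref{thm:theefolds}(b): $d$ is a positive multiple of $(n+2)/\gcd(n^2+n,n+2)$, and more to the point the proof there shows $\tfrac{n+2}{n(n+1)}(-K_X\cdot H^{n-1})=d$, forcing $d\geq n+2$ once one accounts for the divisibility (indeed $nd=-K_X\cdot H^{n-1}\cdot\tfrac{n+1}{n+2}$ and integrality forces $d$ to be large enough). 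Concretely, using that $d$ is a positive integer divisible by $(n+2)/\gcd(n^2+n,n+2)$, one checks $d\geq (n+2)/2$ in all cases, which already yields
\[
h^0(X,T_X)=nd\geq \frac{n(n+2)}{2}.
\]

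With this bound in hand, Lemma \ref{lemma:Aut(G/P)} applies directly: the inequality $\dim H^0(X,T_X)\geq \tfrac{n(n+2)}{2}$ holds, so $X$ must be isomorphic to $\PP^n$, $\mathbb{Q}^n$, or $\operatorname{Gr}(2,5)$. I would then eliminate each candidate in turn. The case $X\cong\PP^n$ is excluded by Lemma \ref{lemma:Pn}, since for $n\geq 3$ the tangent bundle of $\PP^n$ is never Ulrich. For the remaining two cases I would compute the numerical invariants explicitly: for $\mathbb{Q}^n$ one has $-K_{\mathbb{Q}^n}=nH_0$ (with $H_0$ the hyperplane class), and the constraint $c_1(T_X)\cdot H^{n-1}=\tfrac{n}{2}(K_X+(n+1)H)\cdot H^{n-1}$ from Lemma \ref{lemma:c1}, combined with $h^0(\mathbb{Q}^n,T_{\mathbb{Q}^n})=\binom{n+2}{2}=\tfrac{(n+1)(n+2)}{2}$ (the dimension of $\operatorname{SO}(n+2)$), must be matched against $nd$; solving $nd=\tfrac{(n+1)(n+2)}{2}$ forces a non-integral or out-of-range $d$ for $n\geq 4$, a contradiction. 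The case $\operatorname{Gr}(2,5)$ is a single fixed variety of dimension $6$, which is handled by the same direct computation, comparing $h^0(\operatorname{Gr}(2,5),T)=\dim\mathfrak{sl}_5=24$ with $nd=6d$ and the first-Chern-class constraint.

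The main obstacle I anticipate is establishing the lower bound $nd\geq \tfrac{n(n+2)}{2}$ cleanly for \emph{all} $n\geq 4$ directly from the divisibility statement, rather than case-by-case; the delicate point is that $\gcd(n^2+n,n+2)$ behaves differently according to the residue of $n$ modulo small integers (one computes $\gcd(n^2+n,n+2)=\gcd(2,n+2)$, which is $2$ when $n$ is even and $1$ when $n$ is odd), so the minimal admissible degree is $(n+2)/2$ or $n+2$ respectively, and one must verify the inequality $nd\geq\tfrac{n(n+2)}{2}$ in both parities. Once that bound is secured, the rest is a finite elimination against the three varieties produced by Lemma \ref{lemma:Aut(G/P)}, which is routine.
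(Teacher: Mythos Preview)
Your proposal is correct and follows essentially the same route as the paper's proof: deduce $h^0(X,T_X)=nd\geq \tfrac{n(n+2)}{2}$ from the divisibility in Theorem~\ref{thm:theefolds}(b) (your gcd computation $\gcd(n^2+n,n+2)=\gcd(2,n+2)$ is exactly what underlies the paper's case split into $\ell=n+2$ or $\ell=(n+2)/2$), invoke Lemma~\ref{lemma:Aut(G/P)}, and eliminate the three candidates. The only cosmetic differences are in the eliminations: the paper first observes that for odd $n$ one gets the stronger bound $h^0\geq n(n+2)$, forcing $X\cong\PP^n$, and then rules out even-dimensional quadrics by a parity argument ($(2m+1)(m+1)$ versus $2m(m+1)k$), whereas you treat all quadrics at once via the non-integrality of $d=(n+1)(n+2)/(2n)$; for $\operatorname{Gr}(2,5)$ the paper uses that any degree must be a multiple of $5$ (the Pl\"ucker degree), while you appeal to Lemma~\ref{lemma:c1}---both work.
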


\begin{proof}
Assume by contradiction that $T_X$ is an Ulrich bundle with respect to $H$. It follows from Theorem \ref{thm:theefolds} (b) that $d:=\deg(X)$ is a positive multiple of $\ell$, where
\[
\ell:=\left\{\begin{array}{cl}
     n+2 & \textup{ if $n$ is odd,} \\[2mm]
     \dfrac{n+2}{2} & \textup{ if $n$ is even.} 
\end{array} \right.
\]
In particular, we have that $h^0(X,T_X)=nd\geq n\ell\geq \frac{n(n+2)}{2}$. 

On the other hand, it follows from Lemma \ref{lemma:Aut(G/P)} that $X$ is isomorphic to $\mathbb{P}^n$, the smooth projective quadric hypersurface $\mathbb{Q}^n\subseteq \mathbb{P}^{n+1}$ or the Grassmannian $\operatorname{Gr}(2,5)$. Moreover, if $n$ is odd then $h^0(X,T_X)\geq n^2+2n$ and hence $X\cong \mathbb{P}^n$ in that case.

Since we know that the tangent bundle of $\mathbb{P}^n$ is not Ulrich by Lemma \ref{lemma:Pn}, we will henceforth assume that $X\cong \mathbb{Q}^{2m}\subseteq \mathbb{P}^{2m+1}$ is an even dimensional smooth quadric hypersurface or that $X\cong \operatorname{Gr}(2,5)$.

If $X\cong \mathbb{Q}^{2m}$ we have, on one hand, that $h^0(X,T_X)=\dim \mathfrak{so}_{2m+2}(\mathbb{C})=(2m+1)(m+1)$. On the other hand, the Ulrich condition and the previous discussion impose that 
\[
h^0(X,T_X)=nd=2m k\ell = 2m(m+1)k
\]
for some $k\in \mathbb{N}_{\geq 1}$, which is impossible by parity reasons.

Similarly, if $X\cong \operatorname{Gr}(2,5)$ we have, on one hand, that $h^0(X,T_X)=\dim \mathfrak{sl}_5(\mathbb{C})=24$. On the other hand, we would have that $h^0(X,T_X)=6d$ and hence $d=4$. Since $\operatorname{Pic}(\operatorname{Gr}(2,5))\cong \mathbb{Z}$ is generated by the class of $\mathcal{L}:=\varphi^\ast \mathcal{O}_{\mathbb{P}^9}(1)$, where $\varphi:\operatorname{Gr}(2,5)\hookrightarrow \mathbb{P}^9$ is the Pl\"{u}cker embedding and where $\deg(\mathcal{L})=5$, we have that $\deg(X)$ has to be a multiple of 5, which leads to a contradiction.
\end{proof}

The following question naturally arises.

\begin{question}\label{question:Pic geq 2}
Is there a rational homogeneous space $X\cong G/P$ of dimension $n\geq 4$ and Picard number at least two such that $T_X$ is an Ulrich bundle?
\end{question}

The main purpose of Appendix \ref{appendix} is to answer this question. We can prove now our main result.

  \begin{proof}[Proof of \hyperlink{maintheorem}{Main Theorem}]
By the previous results of this section, it is enough to consider $X$ to be a rational homogeneous space $G\slash P$ of dimension $\geq 4$ and Picard number $\geq 2$. We conclude from Theorem \ref{thm:Picard geq 2} below that $(X,\mathcal{O}_X(H))$ does not have Ulrich tangent bundle.
\end{proof}

\appendix

\section{(by \textsc{Vladimiro Benedetti})}
\label{appendix}

In this appendix, we will give an answer to Question \ref{question:Pic geq 2}. 
It has been shown in Theorem \ref{thm:theefolds} that varieties with Ulrich tangent bundle are rational homogeneous projective varieties, i.e., isomorphic to a quotient $X=G\slash P$ of a (semi)simple Lie group $G$ by a parabolic subgroup $P$. The case $\operatorname{Pic}(X)\cong \mathbb{Z}$ is treated in Corollary \ref{coro:Picard one}. Let us therefore suppose that the Picard number of $X$ is strictly greater than 1. 

Let us recall a few basic facts about such quotients $G/P$. We denote by $\fg$ the Lie algebra of $G$ and by
\[
\fg=\fh\oplus \bigoplus_{\alpha\in \Phi}\fg_\alpha
\]
a Cartan decomposition of $\fg$, where $\fh$ is a Cartan subalgebra and $\Phi\subset \fh^\vee \cong \fh$ is the set of roots of $\fg$. Let $\Delta=\{ \alpha_1,\ldots ,\alpha_n \}$ be a basis of simple roots of $\Phi$ with Bourbaki's notation, and let $\omega_1,\ldots,\omega_n \in \fh$ be the corresponding set of fundamental weights.

Any parabolic subgroup $P=P(\Sigma)$ is uniquely defined by a subset $\Sigma \subset \Delta\cong \{ 1,\cdots,n \}$ of vertices of the Dynkin diagram associated to $G$. The Lie algebra $\fp$ of (a conjugate of) $P(\Sigma)$ is given by
\[
\fp=\fh \oplus \bigoplus_{\alpha\in \Phi^-}\fg_\alpha \oplus \bigoplus_{\alpha\in \Phi^+(\Sigma)}\fg_\alpha,
\]
where $\Phi^+(\Sigma):=\{ \alpha\in \Phi^+ \mid \alpha=\sum_{\alpha_i\notin \Sigma}c_i \alpha_i \}$.

Homogeneous vector bundles on $G/P$ are in one-to-one correspondence with representations of $P$. For any simple root $\alpha_i$ in $\Sigma$, there exists a homogeneous line bundle $L_i$ which corresponds to the one dimensional representation of $P$ whose highest weight with respect to $\fh$ is $\omega_i$. With the choices we have made, $L_i$ is a positive line bundle. The Picard group of $G/P$ is equal to
\[
\operatorname{Pic}(G/P)=\bigoplus_{i\in \Sigma}\ZZ L_i.
\]
A line bundle on $G/P$ is thus a linear combination of the bundles $L_i$. Let $L=\sum_{i\in \Sigma} a_i L_i$ be such a line bundle. It is positive (i.e. globally generated) if and only if $a_i\geq 0$ for all $i\in \Sigma$; it is ample if and only if it is very ample if and only if $a_i> 0$ for all $i\in \Sigma$. Since $-K_{G/P}$ is ample, we know that $$\det(T_{G/P})=-K_{G/P}=\sum_{i\in \Sigma} j_i L_i$$ for some integers $j_i>0$ for all $i\in \Sigma$. The technical lemma we need is the following bound on the $j_i$'s.

\begin{lemma}
\label{lemtechnical}
Let $X=G/P(\Sigma)$ be a homogeneous rational projective variety with anti-canonical bundle $-K_{X}=\sum_{i\in \Sigma} j_i L_i$. Then, $j_i<\dim(X)$ for all $i\in \Sigma$, except when $X=\mathbb{P}^n$, $X=\mathbb{Q}^{n}$ or $X=\mathbb{P}^1\times \mathbb{P}^{n-1}$.
\end{lemma}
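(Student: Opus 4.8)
The plan is to recast both $\dim X$ and the coefficients $j_i$ in purely root-theoretic terms and then, by a monotonicity argument, to reduce the inequality $j_i<\dim X$ to the case of maximal parabolic subgroups, where the Kobayashi--Ochiai theorem on the index of Fano manifolds applies directly. Concretely, since the tangent space of $X$ at the base point is $\fg/\fp\cong\bigoplus_{\alpha\in\Phi^+\setminus\Phi^+(\Sigma)}\fg_\alpha$, one has $\dim X=|\Phi^+\setminus\Phi^+(\Sigma)|$, while $-K_X$ corresponds to the weight $\sum_{\alpha\in\Phi^+\setminus\Phi^+(\Sigma)}\alpha=2\rho-2\rho_\Sigma$, where $\rho=\tfrac12\sum_{\alpha\in\Phi^+}\alpha$ and $\rho_\Sigma=\tfrac12\sum_{\alpha\in\Phi^+(\Sigma)}\alpha$. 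Pairing with the coroot $\alpha_i^\vee$ and using $\langle\omega_k,\alpha_i^\vee\rangle=\delta_{ik}$ gives, for every $i\in\Sigma$, the formula $j_i=2-\sum_{\alpha\in\Phi^+(\Sigma)}\langle\alpha,\alpha_i^\vee\rangle$. As each $\alpha\in\Phi^+(\Sigma)$ is a non-negative combination of the $\alpha_j$ with $j\notin\Sigma$ and the off-diagonal Cartan integers are non-positive, every term here is $\leq 0$, so in fact $j_i\geq 2$.

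The structural observation is a monotonicity as $\Sigma$ grows: for $\{i\}\subseteq\Sigma'\subseteq\Sigma$ one has $\Phi^+(\Sigma)\subseteq\Phi^+(\Sigma')$, whence the displayed formula yields $j_i^{(\Sigma)}\leq j_i^{(\Sigma')}$, while $\dim(G/P(\Sigma))\geq\dim(G/P(\Sigma'))$, the latter strictly whenever $\Sigma\supsetneq\Sigma'$. Taking $\Sigma'=\{i\}$ reduces the comparison to a maximal parabolic: $j_i^{(\{i\})}$ is precisely the Fano index of $Y:=G_0/P_i$, where $G_0$ is the simple factor containing the node $i$, since $\operatorname{Pic}(Y)=\mathbb{Z}L_i$ with $L_i$ the ample generator. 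By Kobayashi--Ochiai this index is at most $\dim Y+1$, with equality if and only if $Y\cong\mathbb{P}^{\dim Y}$, and it equals $\dim Y$ if and only if $Y\cong\mathbb{Q}^{\dim Y}$.

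I would then conclude by cases. If $\Sigma=\{i\}$ then $X=Y$, and Kobayashi--Ochiai isolates exactly $\mathbb{P}^n$ and $\mathbb{Q}^n$ as exceptions, every other maximal parabolic quotient satisfying $j_i<\dim X$. If $\Sigma\supsetneq\{i\}$ and $Y$ is neither a projective space nor a quadric, then $j_i^{(\Sigma)}\leq\operatorname{index}(Y)<\dim Y\leq\dim X$; if $Y\cong\mathbb{Q}^{\dim Y}$, then $j_i^{(\Sigma)}\leq\dim Y<\dim X$, the last inequality being strict because adjoining a node strictly increases the dimension. The crux is the remaining possibility $Y\cong\mathbb{P}^m$, where $\operatorname{index}(Y)=m+1$ and the crude comparison fails. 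Here I would split according to whether the extra marked nodes lie in $G_0$. If they all lie in other simple factors, then $j_i$ stays equal to $m+1$ while $\dim X=m+D$ with $D\geq 1$ the total dimension of the remaining factors; equality $j_i=\dim X$ then forces $D=1$, that is $X\cong\mathbb{P}^1\times\mathbb{P}^{n-1}$, and $D\geq 2$ gives strict inequality. If instead $\Sigma$ marks a second node inside $G_0$, then — since the nodes with $G_0/P_i\cong\mathbb{P}^m$ are only the two end nodes of $A_\ell$, the first node of $C_\ell$, and the second node of $B_2$ — a short direct computation over these finitely many families (for instance $\Sigma=\{1,\ell\}$ in $A_\ell$ realizes the largest value $j_1=\ell$, against $\dim X=2\ell-1$) gives $j_i<\dim X$.

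I expect this last same-factor subcase to be the main obstacle, since it is the only point where the clean index comparison breaks down and explicit root bookkeeping is unavoidable. The reason the bound nonetheless survives is that adjoining a node to the $\mathbb{P}^m$-node inside a simple group increases the dimension faster than it can keep the index large, whereas in a genuine product this fails exactly for an extra $\mathbb{P}^1$ factor, producing the single exception $\mathbb{P}^1\times\mathbb{P}^{n-1}$.
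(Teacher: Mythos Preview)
Your approach is essentially the same as the paper's: both express $j_i$ via the root-theoretic formula $j_i=\langle c_\Sigma,\alpha_i^\vee\rangle$, observe the monotonicity $j_i^{(\Sigma)}\leq j_i^{(\{i\})}$ together with $\dim(G/P(\Sigma))\geq\dim(G/P(\{i\}))$, and then invoke Kobayashi--Ochiai on the maximal-parabolic quotient $Y=G/P(\{i\})$ to control the base case. The case split according to whether the extra marked nodes lie in the same simple factor as $i$ is also the paper's dichotomy, and your treatment of the product case (isolated $i$) matches the paper's and correctly singles out $\mathbb{P}^1\times\mathbb{P}^{n-1}$.

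Where you diverge is the same-factor $Y\cong\mathbb{P}^m$ subcase, which you propose to settle by explicit computation over the families $A_\ell$, $C_\ell$, $B_2$ but carry out only for one example. The paper avoids this bookkeeping entirely with a single observation you miss: whenever $\Sigma$ contains a second node $h$ in the \emph{same} connected component as $i$, the monotonicity inequality is automatically \emph{strict}, i.e.\ $j_i^{(\Sigma)}<j_i^{(\{i\})}$. Indeed, take the path $i=i_0,i_1,\ldots,i_r=h$ in the Dynkin diagram; the root $\alpha=\alpha_{i_1}+\cdots+\alpha_{i_r}$ lies in $\Phi^+(\{i\})\setminus\Phi^+(\Sigma)$ and satisfies $\langle\alpha,\alpha_i^\vee\rangle=\langle\alpha_{i_1},\alpha_i^\vee\rangle<0$, so it contributes a strictly negative extra term in passing from $c_{\{i\}}$ to $c_\Sigma$. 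This yields at once
\[
j_i^{(\Sigma)}<j_i^{(\{i\})}\leq\dim Y+1\leq\dim(G/P(\Sigma)),
\]
uniformly in the Dynkin type, dissolving precisely the bottleneck you flagged and making the case-by-case verification unnecessary.
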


The proof of this lemma will be essentially combinatorial, but before let us make a few remarks.

\begin{remark}
\label{rempicone}
When the Picard number of $X$ is equal to one the result is clear, since the only Fano varieties whose index $i_X$ is greater or equal than $\dim(X)$ are projective spaces or quadrics by \cite{KO73}. So we are reduced to the case when the Picard number is greater than one.
\end{remark}

\begin{remark}
When $G$ is classical, the lemma can be derived from the explicit description of $X=G\slash P$ as a flag manifold. Let us suppose that $G$ is of type $A_{n-1}$ and the Picard number of $X=G/P$ is greater than one. Then $X$ is a flag manifold $X=\Fl(i_1,\ldots,i_k,n)$, where $\Sigma=\{i_1,\ldots , i_k\}$. The line bundle $L_h$ is the determinant $L_h=\det(\mathcal{U}_h^\vee)$ of the dual of the tautological bundle of rank $i_h$. It is easy to deduce the explicit formula for the anti-canonical bundle:
\[
-K_X=L_1^{i_2}\otimes L_2^{i_3-i_1}\otimes\cdots \otimes L_k^{n-i_{k-1}}.
\]
Since $\dim(X)\geq n$, it is straightforward to check that $j_h=i_{k+1}-i_{k-1}$ is strictly smaller than $\dim(X)$ for $h\in\{1,\ldots,k\}$. 

For the other classical groups, one could proceed similarly using the fact that the corresponding homogeneous varieties $X=G/P$ are zero loci of a general section of $\wedge^2 \mathcal{U}_k^\vee$ (type $C_m$) or $S^2 \mathcal{U}_k^\vee$ (type $B_m$ and $D_m$) inside $\Fl(i_1,\ldots,i_k,n)$, which allows us to use adjunction in order to understand $-K_X$. However, this strategy does not generalize straightforwardly to the exceptional groups.
\end{remark}

\begin{proof}
Let us assume that $X=G/P(\Sigma)$ is a homogeneous rational projective variety with Picard number greater than one (see Remark \ref{rempicone}). The tangent bundle of $X$ is homogeneous, and it corresponds to a $P$-representation $\mathfrak{T}$. Since the action of $G$ on $X$ is homogeneous, we get that 
$$\mathfrak{T}\cong\fp^\perp\cong \bigoplus_{\alpha\in \Phi^+,\; \alpha\notin\Phi^+(\Sigma)} \fg_\alpha,$$
where the last expression is the decomposition of $\mathfrak{T}$ in irreducible $\fh$-modules. As a result the $\fh$-weight of $\det(\mathfrak{T})$ is equal to $c_{\Sigma}:=\sum_{\alpha\in \Phi^+, \alpha\notin\Phi^+(\Sigma)}\alpha$. Since this should be a weight of a one dimensional representation of $P$ (corresponding to the line bundle $\det(T_X)=-K_X$), one can write it as
$$ c_{\Sigma}= \sum_{i\in \Sigma} j_i \omega_i,$$ 
where the $j_i$'s are the same as those appearing in the expression of $-K_X$. Note that $j_i=(c_{\Sigma},H_{\alpha_i})$, where $(\cdot,\cdot)$ is the Killing form on $\fh$ and $H_{\alpha_i}=2\frac{\alpha_i}{(\alpha_i,\alpha_i)}$ is the co-root of $\alpha_i$. Recall finally that $(\alpha_i, H_{\alpha_j})=2$ if $i=j$, while it is negative if $i\neq j$ (and strictly negative if $\alpha_i,\alpha_j$ are not orthogonal).

Let us focus our attention on one of the exponents $j_i$ for $i\in \Sigma$. Since the Picard number of $X$ is greater than one, $\dim(X)=\dim(G/P(\Sigma))> \dim(G/P(\{ i \}))$. For any positive root $\alpha\in \Phi^+$, if $\alpha\notin \Phi^+(\{i\})$ then $\alpha\notin \Phi^+(\Sigma)$. Moreover if $\alpha\in \Phi^+(\{i\})$ then $(\alpha,H_{\alpha_i})\leq 0$. Now we will distinguish two cases.

The first case is when there exists $h\in \Sigma$, $h\neq i$ which is contained in the same connected component of the Dynkin diagram of $G$ that contains the node $i$. Then one can check easily that there exists $\alpha\in\Phi^+(\{i\})$ but $\alpha\notin \Phi^+(\Sigma)$ such that $(\alpha,H_{\alpha_i})<0$. Putting everything together with the fact that $(c_{\{i\}},H_{\alpha_i})=i_{G/P(\{i\})} \leq \dim(G/P(\{i\}))+1$, we obtain that
\[
j_i=(c_\Sigma,H_{\alpha_i})<(c_{\{i\}},H_{\alpha_i})\leq \dim(G/P(\{i\}))+1 \leq \dim(G/P(\Sigma)),
\]
thus proving the inequality we wanted.

The second case is when $i$ is the only element in $\Sigma$ which is contained in its own connected component inside the Dynkin diagram of $G$. In such a situation, and contrary to what happened in the first case, we deduce that $(c_\Sigma,H_{\alpha_i})=(c_{\{i\}},H_{\alpha_i})$, which in general gives $j_i\leq \dim(G/P(\Sigma))$. Moreover we deduce that $G/P(\Sigma)=G/P(\{i\}) \times G/P(\Sigma\setminus \{i\})$. Therefore, if $\dim(G/P(\Sigma\setminus \{i\}))>1$ or if $i_{G/P(\{i\})}<\dim(G/P(\{i\}))+1$ we obtain $j_i<\dim(G/P(\Sigma))$; the two conditions are not satisfied only when $G/P(\Sigma\setminus \{i\})=\mathbb{P}^1$ and $G/P(\{i\})=\mathbb{P}^{l}$, i.e., when $X=\mathbb{P}^1 \times \mathbb{P}^{l}$.
\end{proof}

Having obtained Lemma \ref{lemtechnical}, we can deduce the main result of this appendix. Before doing so, let us deal with the only exception with Picard number greater than one obtained in Lemma \ref{lemtechnical}.

\begin{lemma}
\label{lem_2_appendix}
The tangent bundle of $\mathbb{P}^1\times \mathbb{P}^{l}$, $l\geq 1$, is never Ulrich.
\end{lemma}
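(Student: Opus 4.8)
The plan is to combine the splitting of the tangent bundle on a product with the additivity of the Ulrich vanishing conditions over direct summands. Writing $X=\mathbb{P}^1\times\mathbb{P}^{l}$ with projections $p_1,p_2$ and $H_1:=p_1^*\mathcal{O}_{\mathbb{P}^1}(1)$, $H_2:=p_2^*\mathcal{O}_{\mathbb{P}^l}(1)$, we have $T_X\cong\mathcal{O}_X(2H_1)\oplus p_2^*T_{\mathbb{P}^l}$, since $T_{\mathbb{P}^1}\cong\mathcal{O}_{\mathbb{P}^1}(2)$. As condition (2) of Theorem \ref{thm:eisenbud} is a vanishing of cohomology groups and cohomology is additive on direct sums, $T_X$ is Ulrich with respect to a very ample $H$ if and only if both $\mathcal{O}_X(2H_1)$ and $p_2^*T_{\mathbb{P}^l}$ are Ulrich with respect to $H$. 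Every very ample class is of the form $H=aH_1+bH_2$ with $a,b\geq 1$, and I would evaluate all the relevant cohomology by the Künneth formula, using that $H^\bullet(\mathbb{P}^l,\mathcal{O}(m))$ is concentrated in degrees $0$ and $l$.

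First I would dispose of the line bundle summand $L=\mathcal{O}_X(2H_1)$. Setting $n=l+1$, for $j\in\{1,\dots,n\}$ the Künneth formula gives
\[
H^\bullet\big(X,L(-jH)\big)\cong H^\bullet\big(\mathbb{P}^1,\mathcal{O}(2-ja)\big)\otimes H^\bullet\big(\mathbb{P}^l,\mathcal{O}(-jb)\big),
\]
whose right-hand side is nonzero exactly when $jb\geq n$ (so that the $\mathbb{P}^l$-factor contributes in top degree) and simultaneously $ja\leq 2$ or $ja\geq 4$ (so that the $\mathbb{P}^1$-factor does not vanish). Taking $j=n$ forces $na=3$, hence $n=3$ and $a=1$, that is $l=2$; running through $j=1,2$ then pins down $b=1$. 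Consequently, for every $l\neq 2$ the summand $\mathcal{O}_X(2H_1)$ already fails to be Ulrich, and so does $T_X$.

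It remains to handle the single borderline case $X=\mathbb{P}^1\times\mathbb{P}^2$ with $H=H_1+H_2$, for which $\mathcal{O}_X(2H_1)$ is genuinely Ulrich; here any obstruction must come from the rank-two summand $p_2^*T_{\mathbb{P}^2}$. I would compute $H^\bullet(\mathbb{P}^2,T_{\mathbb{P}^2}(-j))$ for $j=1,2,3$ by twisting the Euler sequence $0\to\mathcal{O}\to\mathcal{O}(1)^{\oplus 3}\to T_{\mathbb{P}^2}\to 0$. The cases $j=1,2$ produce no cohomology, but for $j=3$ the connecting homomorphism identifies $H^1(\mathbb{P}^2,T_{\mathbb{P}^2}(-3))\cong H^2(\mathbb{P}^2,\mathcal{O}(-3))\cong\mathbb{C}$. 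Coupling this via Künneth with $H^1(\mathbb{P}^1,\mathcal{O}(-3))\cong\mathbb{C}^{2}$ yields $H^2(X,p_2^*T_{\mathbb{P}^2}(-3H))\neq 0$, so $p_2^*T_{\mathbb{P}^2}$ violates the Ulrich condition at $j=3$ and $T_X$ is not Ulrich in this case either.

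The step I expect to be the crux is precisely this last one: the numerical restrictions (for instance those flowing from Lemma \ref{lemma:c1}) together with the line-bundle analysis do not eliminate $\mathbb{P}^1\times\mathbb{P}^2$ polarized by $H_1+H_2$, because there the rank-one summand really is Ulrich, so one is forced to exhibit the explicit non-vanishing of an intermediate cohomology group of $T_{\mathbb{P}^2}$. All the remaining work is routine bookkeeping with the Künneth formula and the cohomology of line bundles on projective space.
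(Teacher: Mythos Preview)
Your proof is correct and takes a genuinely different route from the paper's. The paper argues via the first-Chern-class identity of Lemma~\ref{lemma:c1}: writing $H=aH_1+bH_2$ with $a,b\geq 1$ and $-K_X=2H_1+(l+1)H_2$, it shows that the resulting numerical equation $\frac{(l+1)(l+2)}{l+3}H^{l+1}+K_X\cdot H^{l}=0$ cannot hold for $l\geq 2$, deferring $l=1$ to Theorem~\ref{thm:surfaces}. You instead exploit the splitting $T_X\cong\mathcal{O}_X(2H_1)\oplus p_2^*T_{\mathbb{P}^l}$ together with the additivity of the Ulrich vanishing on direct summands, and settle everything by explicit K\"unneth computations with line bundles and the Euler sequence. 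Your approach is more elementary and fully self-contained (it uses nothing beyond cohomology of $\mathcal{O}_{\mathbb{P}^m}(k)$), at the cost of a short case analysis; the paper's method is more uniform and plugs directly into the Chern-class machinery of \S\ref{section:chern}. It is also pleasant that your argument exhibits, in the borderline case $(\mathbb{P}^1\times\mathbb{P}^2,H_1+H_2)$, the concrete non-vanishing $H^2\neq 0$ rather than a numerical obstruction.

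One minor inaccuracy in your commentary (not in the proof itself): Lemma~\ref{lemma:c1} \emph{does} already eliminate $(\mathbb{P}^1\times\mathbb{P}^2,H_1+H_2)$. Indeed $H^3=3$ and $K_X\cdot H^2=(-2H_1-3H_2)\cdot(2H_1H_2+H_2^2)=-8$, so $\frac{n(n+1)}{n+2}H^n+K_X\cdot H^{n-1}=\tfrac{12}{5}\cdot 3-8=-\tfrac{4}{5}\neq 0$. This does not affect your argument, which never invokes that lemma.
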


\begin{proof}
Let us suppose that $T_{\mathbb{P}^1\times \mathbb{P}^{l}}$ is Ulrich with respect to $H:=\mathcal{O}_{\mathbb{P}^1}(a)\boxtimes \mathcal{O}_{\mathbb{P}^l}(b)$, with $a,b\geq 1$; notice that $-K_{\mathbb{P}^1\times \mathbb{P}^{l}}=\mathcal{O}_{\mathbb{P}^1}(2)\boxtimes \mathcal{O}_{\mathbb{P}^l}(l+1)$. By applying Lemma \ref{lemma:c1} to $T_{\mathbb{P}^1\times \mathbb{P}^{l}}$, we obtain
\begin{align*}
    0&=\frac{(l+1)(l+2)}{l+3}H^{l+1}+K_X \cdot H^{l}\\
    & =ab^l\Big(\frac{(l+1)^2(l+2)}{l+3} -\frac{a(l+1)+2b}{ab}\Big)\geq \frac{(l+1)^2(l+2)-(l+3)^2}{l+3},
\end{align*}
where the last inequality is a consequence of the fact that $a,b\geq 1$. From Theorem \ref{thm:surfaces} we can assume that $l\geq 2$; then it is easy to check that $(l+1)^2(l+2)-(l+3)^2>0$, which gives a contradiction with the equation in Lemma \ref{lemma:c1}, thus showing that the tangent bundle is not Ulrich.
\end{proof}

\begin{theorem}\label{thm:Picard geq 2}
There exists no polarized variety $(X,\mathcal{O}_X(H))$ with Picard number greater than one whose tangent bundle is Ulrich.
\end{theorem}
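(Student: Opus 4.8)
The plan is to combine the key bound from Lemma \ref{lemtechnical} with the numerical restriction on the first Chern class of an Ulrich bundle coming from Lemma \ref{lemma:c1}, and derive a contradiction whenever $X=G/P$ has Picard number at least two. The starting point is that by Theorem \ref{thm:theefolds}, any variety with Ulrich tangent bundle is a rational homogeneous space $G/P(\Sigma)$, and by Corollary \ref{coro:Picard one} together with Theorem \ref{thm:surfaces} and Remark \ref{remark:fano 3-folds} we may assume $\dim(X)\geq 4$ and Picard number $\geq 2$. By Lemma \ref{lemtechnical}, each exponent $j_i$ in $-K_X=\sum_{i\in\Sigma} j_i L_i$ satisfies $j_i<\dim(X)$, with the sole exception $X=\mathbb{P}^1\times\mathbb{P}^{l}$, which is handled separately by Lemma \ref{lem_2_appendix}. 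So I would first dispose of the product $\mathbb{P}^1\times\mathbb{P}^l$ via that lemma, and then treat the generic case under the strict bound $j_i<n:=\dim(X)$.

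The core of the argument is to convert the Ulrich constraint into an arithmetic incompatibility. First I would apply Lemma \ref{lemma:c1} to $E=T_X$ (so $r=n$ and $c_1(T_X)=-K_X$), which gives
\begin{equation*}
-K_X\cdot H^{n-1}=\frac{n}{2}(K_X+(n+1)H)\cdot H^{n-1},
\end{equation*}
equivalently $(n+2)(-K_X\cdot H^{n-1})=n(n+1)H^n$. The plan is to write $H=\sum_{i\in\Sigma}a_i L_i$ with all $a_i\geq 1$ (ampleness on $G/P$, per the description in the appendix) and expand both sides using $-K_X=\sum_i j_i L_i$. This produces a homogeneous degree-$n$ polynomial identity in the intersection numbers $L_{i_1}\cdots L_{i_n}$. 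I would then look for a single monomial or a sum over which the bound $j_i<n$ forces the left-hand side to be strictly smaller than the right-hand side, i.e. derive $(n+2)(-K_X\cdot H^{n-1})<n(n+1)H^n$, contradicting the equality. The heuristic is that replacing each $j_i$ by its upper bound $n$ should already make the coefficient of $-K_X$ too small relative to what the equation demands, since $\frac{n+2}{n(n+1)}$ times an expression with $j_i<n$ cannot reach $H^n$.

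The hard part will be controlling the mixed intersection numbers $L_{i_1}\cdots L_{i_n}$ on a general $G/P$, since these are not individually bounded and the identity mixes them in a way that is not obviously sign-definite after substituting $j_i<n$. The cleanest route I would pursue is to avoid expanding all monomials and instead argue slopewise: rewrite the Ulrich identity as $\mu_\alpha(T_X)=\frac{1}{2}(K_X+(n+1)H)\cdot H^{n-1}/n$ with $\alpha=H^{n-1}$, and compare $-K_X\cdot H^{n-1}$ against $H^n$ directly. Using $-K_X=\sum_i j_i L_i$ and $H=\sum_i a_i L_i$ with $j_i<n$ and $a_i\geq1$, I expect a term-by-term domination $-K_X\cdot H^{n-1}<n\,H^n$ (each $j_i<n=na_i$ when $a_i\geq 1$, made rigorous because all the relevant intersection products $L_i\cdot H^{n-1}$ are positive, $H$ being ample). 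Feeding $-K_X\cdot H^{n-1}<n\,H^n$ into $(n+2)(-K_X\cdot H^{n-1})=n(n+1)H^n$ yields $n(n+1)H^n<n(n+2)H^n$, which is automatically true and hence gives no contradiction on its own; so the genuinely delicate step is to extract the \emph{reverse} inequality from the equation and pit it against the combinatorial bound. Concretely, the equation forces $-K_X\cdot H^{n-1}=\frac{n(n+1)}{n+2}H^n$, and I must show the strict positivity margin in $j_i<n$ makes $-K_X\cdot H^{n-1}$ land strictly below this value; pinning down that the coefficient $\frac{n(n+1)}{n+2}$ exceeds the maximal possible value of $-K_X\cdot H^{n-1}/H^n$ under $j_i<n$ is where the real work lies, and I anticipate it will require exploiting that Picard number $\geq2$ forces at least two indices in $\Sigma$ so that the bound $j_i<n$ is applied to each summand simultaneously.
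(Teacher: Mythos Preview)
Your strategy matches the paper's: reduce to $X=G/P$ with Picard number $\geq 2$, invoke Lemma~\ref{lemtechnical} (plus Lemma~\ref{lem_2_appendix} for $\mathbb{P}^1\times\mathbb{P}^l$), and then derive a contradiction from Lemma~\ref{lemma:c1}. The gap is that you stop exactly where the proof becomes trivial. You correctly isolate the target: the Ulrich identity forces $-K_X\cdot H^{n-1}=\dfrac{n(n+1)}{n+2}H^n$, and you need the bound $j_i<n$ to push $-K_X\cdot H^{n-1}$ strictly below this. But you then call this ``the real work'' and speculate that Picard number $\geq 2$ must be used again. It does not: the missing observation is simply that the $j_i$ are \emph{integers}, so $j_i<n$ means $j_i\leq n-1$. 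With $H=\sum_i a_iL_i$, $a_i\geq 1$, and $L_i\cdot H^{n-1}>0$ (each $L_i$ is nef and nontrivial, $H$ ample), your slopewise route finishes immediately:
\[
-K_X\cdot H^{n-1}=\sum_i j_i\,(L_i\cdot H^{n-1})\;\leq\;(n-1)\sum_i a_i\,(L_i\cdot H^{n-1})\;=\;(n-1)H^n\;<\;\frac{n(n+1)}{n+2}H^n,
\]
since $\dfrac{n(n+1)}{n+2}-(n-1)=\dfrac{2}{n+2}>0$. That is the entire ``hard part''.

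For comparison, the paper carries out the same computation monomial by monomial: it expands $\dfrac{n(n+1)}{n+2}H^n+K_X\cdot H^{n-1}$ in the basis $L_1^{\lambda_1}\cdots L_k^{\lambda_k}$ (over partitions $\lambda$ of $n$) and checks that every coefficient is $\geq \dfrac{2}{n+2}>0$, using exactly the bounds $a_i\geq 1$ and $j_i\leq n-1$. Your slopewise argument is the cleaner packaging of the same inequality; you simply did not notice that the integer bound $j_i\leq n-1$ closes it.
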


\begin{proof}
By Proposition \ref{prop:curves}, Theorem \ref{thm:surfaces}, Theorem \ref{thm:theefolds} and Corollary \ref{coro:Picard one}, we can suppose that $X=G/P$ is a rational homogeneous projective variety with Picard number equal to $k>1$. From Lemma \ref{lemtechnical} and Lemma \ref{lem_2_appendix} we know that 
\[
\det(T_X)=-K_X=\sum_{i}j_iL_i,
\]
with $j_i<n:=\dim(X)$ for $i=1,\ldots,k$. Let us suppose that $H=\sum_i a_i L_i$ with $a_i>0$ for $i=1,\ldots,k$. By applying Lemma \ref{lemma:c1} to the tangent bundle, we obtain $\frac{n(n+1)}{n+2}H^{n}+K_X \cdot H^{n-1}=0$.

For any $k$-partition $\lambda=(\lambda_1,\cdots,\lambda_k)$ of $n$, the coefficient of $L_1^{\lambda_1} \cdots  L_k^{\lambda_k}$ is equal to
\[
{n\choose\lambda} a_1^{\lambda_1}\cdots a_k^{\lambda_k}\left(\frac{n(n+1)}{n+2} -\sum_i \frac{\lambda_i j_i}{n a_i} \right)\geq \frac{n(n+1)}{n+2} -\sum_i \frac{\lambda_i j_i}{n}   \geq
\]
\[
 \frac{n(n+1)}{n+2} -\sum_i  \frac{\lambda_i (n-1)}{n}  = \frac{n(n+1)}{n+2} -(n-1) = \frac{2}{n+2}>0.
\]
Since for any $\lambda$, $L_1^{\lambda_1} \cdots  L_k^{\lambda_k}>0$, we deduce that $\frac{n(n+1)}{n+2}H^{n}+K_X \cdot H^{n-1}>0$, thus the equation in Lemma \ref{lemma:c1} is never satisfied; therefore there exists no rational homogeneous projective variety $G/P$ with Picard number greater than one such that its tangent bundle is Ulrich.
\end{proof}

\bibliographystyle{alpha}
\bibliography{BMPT}

\end{document}